\theoremstyle{definition} 
\theoremstyle{plain} 
\newtheorem{theorem}{Theorem}
\newtheorem{corollary}{Corollary}[theorem]
\theoremstyle{remark} 
\newtheorem{remark}{Remark}
\newcommand*{\dt}[1]{
  \accentset{\mbox{\large\bfseries .}}{#1}}
\newcommand{\bc}{{\mathbf c}}
\newcommand{\be}{{\mathbf e}}
\newcommand{\bu}{{\mathbf u}}
\newcommand{\bv}{{\mathbf v}}
\newcommand{\bx}{{\mathbf x}}
\newcommand{\by}{{\mathbf y}}
\newcommand{\bB}{{\mathbf B}}
\newcommand{\bD}{{\mathbf D}}
\newcommand{\bF}{{\mathbf F}}
\newcommand{\bI}{{\mathbf I}}
\newcommand{\bJ}{{\mathbf J}}
\newcommand{\bQ}{{\mathbf Q}}
\newcommand{\bR}{{\mathbf R}}
\newcommand{\bU}{{\mathbf U}}
\newcommand{\bP}{{\mathbf P}}
\newcommand{\bX}{{\mathbf X}}
\newcommand{\bY}{{\mathbf Y}}
\newcommand{\bPhi}{{\boldsymbol \Phi}}
\newcommand{\bPsi}{{\boldsymbol \Psi}}
\begin{document}

\begin{frontmatter}



\title{Data-Driven Reduced-Order Models for Port-Hamiltonian Systems with Operator Inference}



\author[1]{Yuwei Geng}
\ead{ygeng@email.sc.edu}
\author[1]{Lili Ju}
\ead{ju@math.sc.edu}
\author[2]{Boris Kramer}
\ead{bmkramer@ucsd.edu}
\author[1]{{Zhu Wang}\texorpdfstring{\corref{cor1}}{}}
\ead{wangzhu@math.sc.edu}
\cortext[cor1]{Corresponding author}
\address[1]{Department of Mathematics, University of South Carolina, Columbia, SC 29208, USA}
\address[2]{Department of Mechanical and Aerospace Engineering, University of California San Diego, La Jolla, CA 92093, USA}

\begin{abstract}
Hamiltonian operator inference has been developed in [\textit{Sharma, H., Wang, Z., Kramer, B., Physica D: Nonlinear Phenomena, 431, p.133122, 2022}] to learn structure-preserving reduced-order models (ROMs) for Hamiltonian systems. The method constructs a low-dimensional model using only data and knowledge of the functional form of the Hamiltonian. The resulting ROMs preserve the intrinsic structure of the system, ensuring that the mechanical and physical properties of the system are maintained. In this work, we extend this approach to port-Hamiltonian systems, which generalize Hamiltonian systems by including energy dissipation, external input, and output. Based on snapshots of the system's state and output, together with the information about the functional form of the Hamiltonian, reduced operators are inferred through optimization and are then used to construct data-driven ROMs. To further alleviate the complexity of evaluating nonlinear terms in the ROMs, a hyper-reduction method via discrete empirical interpolation is applied. Accordingly, we derive error estimates for the ROM approximations of the state and output. Finally, we demonstrate the structure preservation, as well as the accuracy of the proposed port-Hamiltonian operator inference framework, through numerical experiments on a linear mass-spring-damper problem and a nonlinear Toda lattice problem.
\end{abstract}


\begin{keyword}
port-Hamiltonian system \sep operator inference\sep model order reduction \sep data-driven modeling
\end{keyword}

\end{frontmatter}



\section{Introduction}
A port-Hamiltonian (pH) system \cite{van2014port} provides a mathematical framework for modeling and controlling a wide range of physical systems, with applications in fields such as electrical circuits, thermodynamics, chemical processes, and mechanical engineering \cite{duindam2009modeling}. Unlike a traditional Hamiltonian system, which describes the evolution of a physical system in time based on energy conservation, a pH system incorporates energy dissipation and external inputs and outputs through \textit{ports}. We consider a finite-dimensional pH system of the form
\begin{subequations}\label{eq:PH_system} 
\begin{empheq}[left=\empheqlbrace]{align}
\label{eq:PH_system1}
\dt{\bx}(t) &= (\bJ-\bR)\nabla_{\bx} H(\bx(t)) +\bB \bu(t),
\\[1ex]
\label{eq:PH_system2}
\by(t) &= \bB^\intercal \nabla_{\bx} H(\bx(t)),
\end{empheq}
\end{subequations}
with $\bx (0) =\bx^0$,  where $\bx(t) \in \mathbb{R}^n$ represents the $n$-dimensional state vector, $H(\bx)$ is the Hamiltonian function, which is continuously differentiable and represents the internal energy of the system, the matrix $\bJ = -\bJ^\intercal \in \mathbb{R}^{n\times n}$ is skew-symmetric, describing the interconnection of the system's energy storage elements;  
the matrix $\bR= \bR ^\intercal \in \mathbb{R}^{n\times n}$ is symmetric and positive semi-definite, characterizing the energy dissipation in the system,
 $\bB \in \mathbb{R}^{n\times m}$ is the port matrix, which describes the modalities through which energy is imported into or exported from the system, $\bu(t) \in \mathbb{R}^m$ represents the external input vector,
 and $\by(t)\in \mathbb{R}^{m}$ is the system's output vector. Based on the structural properties of $\bJ$ and $\bR$, one can conclude that the Hamiltonian function satisfies the dissipation inequality: for any $t^2 > t^1 \geq 0$,
\begin{equation}\label{eq:Hp}
\begin{aligned}
    H(\bx(t^2)) - H(\bx(t^1)) &=\int_{t^1}^{t^2} (\nabla_{\bx}H(\bx(t))^\intercal [(\bJ-\bR)\nabla_{\bx}H(\bx(t)+\bB\bu(t)] \, {\rm{d}}t\\
    &\leq
    \int_{t^1}^{t^2} \by(t)^\intercal \bu(t) \, {\rm{d}}t.
    \end{aligned}
\end{equation}

Given an initial condition, the high-dimensional dynamical system \eqref{eq:PH_system} can be simulated using geometric numerical integration schemes \cite{hairer2006geometric}. However, when the system needs to be solved repeatedly, those high-dimensional numerical simulations become computationally expensive. Model order reduction (MOR) can be used to design a surrogate model with fewer degrees of freedom to accelerate the simulations. 
It has been successfully applied to many engineering problems, particularly those governed by differential equations. Various techniques have been developed, including the reduced basis (RB) method \cite{quarteroni2015reduced}, proper orthogonal decomposition (POD) \cite{holmes2012turbulence},  dynamic mode decomposition (DMD) \cite{kutz2016dynamic,schmid2022dynamic}, operator inference \cite{kramer2024learning,peherstorfer2016data}, and interpolatory model reduction \cite{antoulas2005approximation,antoulas2020interpolatory}, among others. A common idea of these techniques is to extract characteristic features from either training data or the model equations and build a ROM during an offline stage, which can then be used for simulations at a low cost during an online stage. For an overview of projection-based MOR for parametric dynamical systems, we refer the reader to \cite{benner2015survey,hesthaven2016certified}.  

For systems with certain structure, such as Lagrangian, Hamiltonian and pH systems, failure to satisfy the structure at the ROM level can result in unphysical solutions and unstable behavior \cite{lall2003structure,carlberg2015preserving}. For instance, the reduced-order operators for $\bJ$ and $\bR$ in \eqref{eq:PH_system} may lose their respective skew-symmetry or symmetric semi-positive definiteness properties. To address this issue, structure-preserving MOR techniques have been introduced.  
Similar to other types of MOR, structure-preserving ROMs can be constructed either intrusively or non-intrusively. 
Intrusive methods include the POD-based Galerkin projection method for Hamiltonian systems \cite{gong2017structure,miyatake2019structure,barbulescu2021efficient}, 
POD/$\mathcal{H}_2$-based Petrov-Galerkin projection method for pH systems \cite{beattie2011structure,chaturantabut2016structure},  
proper symplectic decomposition (PSD) \cite{peng2016symplectic}, a variationally consistent approach for canonical Hamiltonian systems~\cite{gruber2025variationally}, and 
RB-ROMs for Hamiltonian systems \cite{afkham2017structure,hesthaven2021structure,pagliantini2021dynamical,hesthaven2022reduced}. 
Additionally, to efficiently evaluate nonlinear terms in ROMs, the discrete empirical interpolation method (DEIM), originally developed in \cite{barrault2004empirical,chaturantabut2010nonlinear} for interpolating the nonlinear terms over selected sparse sample points, has been extended in \cite{pagliantini2023gradient} to the structure-preserving setting for Hamiltonian ROMs. 
Unlike classical model reduction approaches that use linear subspaces, the authors in \cite{sharma2023symplectic,yildiz2024data,buchfink2023symplectic} combine quadratic or nonlinear manifold learning with symplectic auto-encoders to construct an intrusive ROM for Hamiltonian systems.
Non-intrusive structure-preserving MOR techniques, on the other hand, construct ROMs by using only snapshot data and some prior knowledge of the dynamical system. Hence, they are sometimes referred to as `glass box' approaches. 
Recent work includes learning of canonical Hamiltonian ROMs based on operator inference, where the linear part of the Hamiltonian gradient is inferred through constrained least-squares solutions~\cite{sharma2022hamiltonian}. Non-canonical Hamiltonian operator inference ROMs have been developed in~\cite{gruber2023canonical}, assuming the entire Hamiltonian function is known and inferring the reduced operator associated with the linear differential operator. Gradient-preserving operator inference (GP-OpInf) has been developed for conservative \textit{or} dissipative systems in~\cite{geng2024gradient}. A tensor-based operator inference approach has been introduced in~\cite{vijaywargiya2025tensor} to construct a ROM with the Hamiltonian structure while capturing parametric dependencies.

To date, data-driven model reduction for \textit{nonlinear} pH systems remains an open problem. On the other hand, for linear pH systems, there have been some works. In \cite{cherifi2022non}, the Loewner framework is applied to derive a state-space model using only input-output time-domain data, and the pH system is then inferred by solving an associated optimization problem. In \cite{morandin2023port}, the pH dynamic mode decomposition method is developed to infer linear pH ROMs, in which the reduced-order operators for $\bJ$ and $\bR$ are found in an iterative way under the assumption that the quadratic Hamiltonian is known. 

In this paper, we propose a pH operator inference \text{(pH-OpInf)} method to construct ROMs for linear and nonlinear pH systems, {\em using snapshot data of the state, external input, and output, along with knowledge of the functional form of the Hamiltonian}. The main contributions of our work include:
\begin{enumerate}[(i),noitemsep]
    \item We propose two optimization formulations to learn the structure-preserving reduced-order operators for constructing ROMs of pH systems. In particular, the optimization in the first formulation is decoupled into two subtasks in the second formulation to facilitate its solution.
    \item We incorporate hyper-reduction into the structure-preserving ROMs for nonlinear pH systems.
    \item We derive {\em a priori} error estimates for the reduced-order approximations of the inferred structure-preserving ROM for both state and output variables.
\end{enumerate}

The remainder of this paper is organized as follows. In Section \ref{sec: SP-G}, we review a Galerkin projection-based ROM as the state-of-the-art intrusive MOR method for reducing the pH system \eqref{eq:PH_system}, and employ DEIM to ensure efficient simulation of the nonlinear ROM. Section \ref{sec: GP-OpInf} proposes the new \text{pH-OpInf} method and suggests two optimization problems to find the reduced-order operators. In Section \ref{sec: error_esti}, we analyze the resulting structure-preserving ROMs and derive {\em a priori} error estimates for both state and output approximations.
The effectiveness of the proposed ROMs is numerically demonstrated through two examples in Section \ref{sec: Num_exp}. Finally, some concluding remarks are drawn in Section \ref{sec: conclusion}.
 
\section{Background on Projection-Based Reduced-Order Models for the Port-Hamiltonian System}\label{sec: SP-G}

The POD method \cite{berkooz1993proper} has been widely used in MOR to find a set of reduced basis vectors from snapshot data, which are then used to construct the ROM evolution equations. In practice, the basis vectors are often computed using singular value decomposition (SVD) on the array of snapshots, and the associated left singular vectors are truncated to form the reduced basis. The truncation error provides empirical guidance on choosing the number of reduced basis vectors. By approximating the state variables in the subspace spanned by the reduced basis and projecting the original system onto either the same subspace or another low-dimensional space, one can construct a Galerkin or Petrov-Galerkin projection-based ROM. This workflow is intrusive, as it requires access to the matrices and vectors of the full-order model (FOM) to build the ROM. When applied to Hamiltonian or pH systems, such ROMs may alter the structural properties of the FOM and result in inaccurate and unphysical solutions. This limitation motivates the development of structure-preserving ROMs, see examples in \cite{gugercin2012structure,chaturantabut2016structure,gong2017structure}. In the following, we focus on Galerkin projection and the design of a structure-preserving
ROM for the pH system~\eqref{eq:PH_system}.

\subsection{Intrusive Structure-Preserving ROMs}
Assume that the snapshot data for the state variables of the pH system \eqref{eq:PH_system} is given in matrix form as 
\begin{flalign}
\mathbf{X} \coloneqq \left[ \mathbf{x}(t^0), \mathbf{x}(t^1), \dots, \mathbf{x}(t^{s}) \right] \in \mathbb{R}^{n\times (s+1)},
\label{eq:x}
\end{flalign}
where $\bx(t^i)$ is the state vector at the discrete time instances $t^i$, for $i=0,1,\dots,s$. 
For simplicity, we assume the snapshots are uniformly distributed, that is, $t^i = i\Delta t$ with a constant $\Delta t$ that could be the time step size used in full-order simulations or a checkpoint at which solutions are stored. 
The POD basis matrix is denoted as $\bPhi \coloneqq [\phi_1,\phi_2,\dots,\phi_r]$, where $\phi_i$ is the $i$th left singular vector of $\mathbf{X}$ associated with the singular value $\sigma_i$, and $r$ is the dimension of the POD basis that is less than the rank $d$ of $\bX$.   
The dimension $r$ is typically selected to ensure that the POD basis captures a significant portion of the snapshot energy. In particular, given a prescribed tolerance $\tau$, $r$ is chosen such that $\frac{\sum_{i=1}^r \sigma_i^2}{\sum_{i=1}^d \sigma_i^2}\geq \tau$. 
Note that $\bPhi^\intercal\bPhi ={\bf I}_r$. 
The reduced approximation employs the ansatz $\hat{\bx}(t)=\bPhi\bx_r(t)$, where $\bx_r(t) \in \mathbb{R}^r$ is the unknown POD coefficient vector.

Replacing $\bx(t)$ with the reduced approximation $\hat{\bx}(t)$, and applying Galerkin projection to \eqref{eq:PH_system}, we obtain a low-dimensional ROM for the pH system \eqref{eq:PH_system}, denoted as \text{G-ROM}:  
\begin{subequations}\label{eq:Galerkin_ph} 
\begin{empheq}[left=\empheqlbrace]{align}
\label{eq:Galerkin_ph1}
\dt{\bx}_r(t) &= \bPhi^\intercal(\bJ - \bR)\nabla_{{\bx}} H(\bPhi{\bx_r(t)}) +\bPhi^\intercal\bB \bu(t),
\\[1ex]
\label{eq:Galerkin_ph2}
\by_r(t) &=\bB^\intercal\nabla_{\bx} H(\bPhi{\bx_r(t)}),
\end{empheq}
\end{subequations}
where $\by_r(t)\in \mathbb{R}^m$ is the reduced-order approximation of the output. 
Let us define the reduced Hamiltonian $H_r(\bx_r(t)) \coloneqq H(\bPhi \bx_r(t))$, then the rate of change of the reduced Hamiltonian during the ROM simulation is
\begin{align*}
    \frac{\rm d}{{\rm d} t}H_r(\bx_r(t)) &= [\nabla_{\bx_r} H(\bPhi \bx_r(t))]^\intercal\, \frac{{\rm d}\bx_r(t)}{{\rm d}t}\\
                        & = \left[\bPhi^\intercal \nabla_{\bx} H(\bPhi\bx_r(t))\right]^\intercal\left[ \bPhi^\intercal(\bJ - \bR)\nabla_{\bx} H(\bPhi{\bx_r(t)}) +\bPhi^\intercal\bB \bu(t)\right]\\
                        &= [\nabla_{\bx} H(\bPhi\bx_r(t))]^\intercal \bPhi \bPhi^\intercal \bJ\nabla_{\bx} H(\bPhi\bx_r(t)) - [\nabla_{\bx} H(\bPhi\bx_r(t))]^\intercal \bPhi \bPhi^\intercal \bR \nabla_{\bx} H(\bPhi\bx_r(t)) \\
                        &\quad + [\nabla_{\bx} H(\bPhi\bx_r(t))]^\intercal \bPhi \bPhi^\intercal \bB \bu(t).
\end{align*}
Since the matrices $\bPhi\bPhi^\intercal \bJ$ and $\bPhi\bPhi^\intercal \bR$ do not retain the same structural properties as $\bJ$ and $\bR$, integrating the equation from $t^1$ to $t^2$ shows that $H_r(\bx_r(t^2))-H_r(\bx_r(t^1))$ is not guaranteed to remain bounded above by the integration of $\by_r(t)^\intercal \bu(t)$ over $[t^1, t^2]$, as indicated by the dissipation inequality \eqref{eq:Hp} for the FOM. Typically, the \text{G-ROM} is not ensured to be dissipative even when $u \equiv \mathbf{0}$.

To overcome the same issue in MOR for a Hamiltonian system, the PSD method has been introduced in \cite{peng2016symplectic}, which is specifically designed for systems with symplectic structures. Since the pH system \eqref{eq:PH_system} is more general, we adopt the method suggested in \cite{gong2017structure} and seek $\widetilde{{\bJ}}_r$ and $\widetilde{{\bR}}_r$ such that 
$$\bPhi^\intercal \bJ = \widetilde{{\bJ}}_r \bPhi ^\intercal\quad \text{and} \quad \bPhi^\intercal \bR = \widetilde{{\bR}}_r \bPhi ^\intercal.$$
Following a least-squares approximation, one identifies the reduced-order operators as
$$\widetilde{\bJ}_r= \bPhi^\intercal \bJ \bPhi \quad \text{and} \quad
\widetilde{\bR}_r = \bPhi^\intercal \bR \bPhi.$$
Replacing $\bPhi^\intercal \bJ$ and $\bPhi^\intercal \bR$ in \eqref{eq:Galerkin_ph} with $\widetilde{\bJ}_r\bPhi^\intercal$ and $\widetilde{\bR}_r\bPhi^\intercal$, respectively, defining  
$\widetilde{\bB}_r = \bPhi^\intercal \bB$, and noticing that 
\begin{equation}
\label{eq:gradHr}
\nabla_{\bx_r}H_r(\bx_r(t)) = \bPhi^\intercal \nabla_{\bx}H(\bPhi \bx_r(t)),
\end{equation}
we obtain a structure-preserving Galerkin projection-based ROM (\text{SP-G-ROM}) for the pH system \eqref{eq:PH_system} as
\begin{subequations}\label{eq:PH_rom_system} 
\begin{empheq}[left=\empheqlbrace]{align}
\label{eq:PH_rom_system1}
\dt{\bx}_r(t) &= (\widetilde{\bJ}_r - \widetilde{\bR}_r)\nabla_{\bx_r} H_r(\bx_r(t)) + \widetilde{\bB}_r \bu(t),
\\[1ex]
\label{eq:PH_rom_system2}
\by_r(t) &=\widetilde{\bB}_r^\intercal\nabla_{\bx_r} H_r(\bx_r(t))
\end{empheq}
\end{subequations}
with the initial condition \begin{equation}
\bx_r(0) = \bPhi^\intercal\bx^0.
\label{eq:ic}
\end{equation}
Due to the same structure as its full-order counterpart, the time derivative of the reduced Hamiltonian is
\begin{align*}
    \frac{\rm d}{{\rm d} t}H_r(\bx_r(t)) 
                        & = [\nabla_{\bx_r} H_r(\bx_r(t))]^\intercal \, \frac{{\rm d}\bx_r(t)}{{\rm d} t}\\
                        & = \left[ \nabla_{\bx_r} H_r(\bx_r(t))\right]^\intercal\left[ (\widetilde{\bJ}_r - \widetilde{\bR}_r)\nabla_{\bx_r} H_r(\bx_r(t)) + \widetilde{\bB}_r \bu(t)\right]\\
                        &\leq \left[ \nabla_{\bx_r} H_r(\bx_r(t))\right]^\intercal \widetilde{\bB}_r \bu(t) \quad(\text{as } \widetilde{\bJ}_r=- \widetilde{\bJ}_r^\intercal, \widetilde{\bR}_r = \widetilde{\bR}_r^\intercal \text{ and } \widetilde{\bR}_r \succcurlyeq \mathbf{0}) \\
                        & = \by_r(t)^\intercal \bu(t). 
\end{align*}
Thus, integrating the equation from $t^1$ to $t^2$ yields that $H_r(\bx_r(t^2)) - H_r(\bx_r(t^1))$ is bounded by $\int_{t^1}^{t^2} \by_r(t)^\intercal \bu(t)\,\rm{d}t$ from above, therefore, the dissipation inequality \eqref{eq:Hp} holds at the reduced-order level.

Since the dimension $r$ of the ROM is typically much smaller than $n$, simulating the \text{SP-G-ROM} is generally more computationally efficient than simulating the FOM. However, when $\nabla_{\bx_r} H_r(\bx_r(t))$ is nonlinear, the overall computational complexity still depends on $n$. Therefore, an additional approximation, termed hyper-reduction, is needed to accelerate the ROM simulation.

\subsection{Hyper-Reduction of Nonlinear Hamiltonian ROMs}\label{sec:hyper}
The nonlinear Hamiltonian function can be recast as  
\begin{equation*}
H(\bx(t)) = \frac{1}{2}\bx(t)^\intercal\mathbf{Q}\bx(t) + N(\bx(t)),
\end{equation*}
where $\mathbf{Q}\in \mathbb{R}^{n\times n}$ represents the quadratic part and $N(\bx)$ captures the remaining non-quadratic terms.
After defining $\bQ_r = \bPhi^\intercal\bQ\bPhi$ and $N_r(\bx_r(t)) = N(\bPhi \bx_r(t))$, we have the gradient of the reduced Hamiltonian approximation as $$\nabla_{\bx_r} H_r(\bx_r(t)) = \bQ_r \bx_r (t)+\nabla_{\bx_r}N_r (\bx_r(t)).$$
The calculation of $\nabla_{\bx_r}N_r (\bx_r(t))$ is expensive for large $n$. To reduce the computational cost, we apply the approach developed in \cite{pagliantini2023gradient}, outlined below, that extends DEIM to such a nonlinear Hamiltonian function. 
Note that both standard DEIM \cite{chaturantabut2010nonlinear,drmac2016new} and Q-DEIM \cite{drmac2016new} can be used to select interpolation points. We choose the latter because of its enhanced stability and accuracy. Since Q-DEIM is a variant of DEIM, we will continue to use `DEIM' to refer to the hyper-reduction approach and label the associated hyper-reduced ROMs. 

First, without loss of generality, $N_r(\bx_r(t))$ can be written as
\begin{equation*}
N_r(\bx_r(t)) = \sum_{i=1}^{d} c_i h_i(\bPhi\bx_r(t)) = \mathbf{c}^\intercal \mathbf{h}(\bPhi\bx_r(t)),
\end{equation*}
where $d\in \mathbb{N}, \mathbf{c} \in \mathbb{R}^d$ is a constant vector and $\mathbf{h}(\cdot)$ is a vector-valued function. 
Correspondingly, its gradient is 
$$
\nabla _{\bx_r} N_r(\bx_r(t)) = \bPhi^\intercal J_{h}^\intercal(\bPhi \bx_r(t)) \mathbf{c},
$$
in which $J_h(\cdot) \in \mathbb{R}^{d\times n}$ is the Jacobian of $\mathbf{h}(\cdot)$.
Second, the approach requires forming the nonlinear snapshot matrix 
\begin{equation*}
    M_J = [J_h(\bPhi\bPhi^\intercal\bx(t^0))\bPhi,\dots,J_h(\bPhi\bPhi^\intercal\bx(t^{s}))\bPhi]  \in \mathbb{R}^{d\times r(s+1)},
\end{equation*}
which is used to determine the nonlinear basis $\bPsi\in \mathbb{R}^{d\times m}$ and the interpolation matrix $\bP:=[\be_{\wp_1},\dots,\be_{\wp_m}]$ using DEIM, where $\be_i$ is the $i$th unit vector of $\mathbb{R}^d$ and $\{\wp_1,\dots,\wp_m\}\subset \{1,\dots,d\}$ are the interpolation indices. 
Letting $\mathbb{P} = \bPsi(\bP^\intercal \bPsi)^{-1}\bP^\intercal$,  
the nonlinear term $N_r(\bx_r(t))$ is approximated by  
\begin{equation*}
    N_{h_r}(\bx_r(t)) \coloneqq \mathbf{c}^\intercal \mathbb{P} \mathbf{h}(\bPhi\bx_r(t)).
\end{equation*}
Finally, after defining the hyper-reduced Hamiltonian 
\begin{equation*} 
H_{h_r}(\bx_r(t)) = \frac{1}{2}\bx_r(t)^\intercal\mathbf{Q}_r\bx_r(t) + N_{h_r}(\bx_r(t))
\end{equation*} 
and using the same $\widetilde{\bJ}_r$, $\widetilde{\bR}_r$ and $\widetilde{\bB}_r$ as in \eqref{eq:PH_rom_system}, we construct another structure-preserving ROM, denoted as the \text{SP-G-DEIM} model:
\begin{subequations}\label{eq:PH_rom_DEIM_system}
\begin{empheq}[left=\empheqlbrace]{align}
\dt{\bx}_r(t) &= (\widetilde{\bJ}_r-\widetilde{\bR}_r)\nabla_{\bx_r} H_{h_r}(\bx_r(t)) +\widetilde{\bB}_r \bu(t),\\[1ex]
\label{eq:PH_rom_DEIM_system2}
\by_r(t) &= \widetilde{\bB}_r^\intercal \nabla_{\bx_r} H_{h_r}(\bx_r(t)),
\end{empheq}
\end{subequations}
where 
\begin{equation}
\label{eq:gradHhr}
\nabla_{\bx_r} H_{h_r}(\bx_r(t)) = \bQ_r \bx_r(t) + \bPhi^\intercal J_h^\intercal(\bPhi \bx_r(t))\mathbb{P}^\intercal \mathbf{c},
\end{equation}
and we have the same initial condition as in \eqref{eq:ic}. 
Calculating the rate of change in $H_{h_r}(\bx_r(t))$ reveals that the dissipation inequality is satisfied at the reduced-order level.
Since $\nabla_{\bx_r} H_{h_r}(\bx_r(t))$ is calculated 
with the help of interpolation, the ROM's online simulation is faster than the \text{SP-G-ROM}. For a detailed computational complexity analysis, see \cite{pagliantini2023gradient}.

Notice that for both \eqref{eq:PH_rom_system} 
 and \eqref{eq:PH_rom_DEIM_system}, in order to generate the reduced-order matrices $\widetilde{\bJ}_r$, $\widetilde{\bR}_r$ and $\widetilde{\bB}_r$, the full-order matrices $\bJ$, $\bR$ and $\bB$ are required, hence they are intrusive approaches. However, in many real-world applications, such FOM information may not be readily available. In the next section, we introduce a nonintrusive pH-OpInf approach for constructing data-driven, structure-preserving ROMs without requiring access to the $\bJ$, $\bR$, and $\bB$ matrices of the FOM.

\section{Data-driven Reduced-order Models with the Port-Hamiltonian Operator Inference}\label{sec: GP-OpInf}
To learn structure-preserving ROMs, \text{H-OpInf} in 
\cite{sharma2022hamiltonian,gruber2023canonical} and \text{GP-OpInf} in \cite{geng2024gradient} have been introduced for both canonical and noncanonical Hamiltonian systems, and more general gradient systems, respectively. Building on these approaches, we derive a structure-preserving ROM  for the pH system \eqref{eq:PH_system}, referred to as \text{pH-OpInf-ROM}, written in the form
\begin{subequations}\label{eq:PH_rom_inf_system}
\begin{empheq}[left=\empheqlbrace]{align}
\label{eq:PH_rom_inf_system1}
\dt{\bx}_r(t) &= (\bJ_r - \bR_r)\nabla_{\bx_r} H_r(\bx_r(t)) + \bB_r \bu(t),
\\[1ex]
\label{eq:PH_rom_inf_system2}
\by_r(t) &=\bB_r^\intercal\nabla_{\bx_r} H_r(\bx_r(t)).
\end{empheq}
\end{subequations}
While the model form is equivalent to \eqref{eq:PH_rom_system}, the notation is distinct to emphasize that, numerically, the projected matrices in \eqref{eq:PH_rom_system} and the learned matrices in \eqref{eq:PH_rom_inf_system} are generally different. Next, we detail our approach to inferring $\bJ_r$, $\bR_r$ and $\bB_r$, 
assuming that the functional form of the Hamiltonian is provided.

\subsection{Data Reduction}
Given snapshot data $\bX$ as in \eqref{eq:x}, as well as the input and output data
\begin{flalign}\label{eq:UYdata}
\mathbf{U} \coloneqq \left[ \mathbf{u}(t^0), \mathbf{u}(t^1), \dots, \mathbf{u}(t^{s}) \right] 
\qquad \text{and} \qquad
\mathbf{Y} \coloneqq \left[ \mathbf{y}(t^0), \mathbf{y}(t^1), \dots, \mathbf{y}(t^{s}) \right],
\end{flalign}
we generate time derivative data by applying a finite difference operator, denoted as $\mathcal{D}_t[\cdot]$, on the state vectors: 
\begin{flalign*}
\dt{\bX}  &\coloneqq \left[ \mathcal{D}_t[{\bx}(t^0)], \mathcal{D}_t[{\bx}(t^1)], \dots, \mathcal{D}_t[{\bx}(t^s)] \right].
\end{flalign*}
For instance, 
$\mathcal{D}_t[\cdot]$ can be chosen to be the second-order finite difference operator, satisfying  
\begin{flalign*}
\mathcal{D}_t[{\bx}(t^j)] = 
\left\{
\begin{array}{ll}
(-3\bx(t^0)+4\bx(t^1)-\bx(t^2))/(2\Delta t), & j=0, \\
(\bx(t^{j+1})-\bx(t^{j-1}))/(2\Delta t), & j=1, \dots, s-1,\\
(\bx(t^{s-2})-4\bx(t^{s-1})+3\bx(t^s))/(2\Delta t), & j=s. 
\end{array}
\right.
\label{eq:dt}
\end{flalign*}
We next compute a snapshot matrix for the gradient of the Hamiltonian function 
\begin{flalign*}
\mathbf{F}  &\coloneqq \left[ \nabla_\mathbf{x} H(\mathbf{x}(t^0)), \nabla_\mathbf{x} H(\mathbf{x}(t^1)), \dots, \nabla_{\mathbf{x}} H(\mathbf{x}(t^s)) \right].
\end{flalign*}
After finding the POD basis $\bPhi$ from $\bX$, we project the high-dimensional matrices onto the POD subspace and obtain the projected data 
\begin{equation}\label{eq:XrFrdata}
    \bX_r = \bPhi ^\intercal \bX,\quad 
\dt{\bX}_r = \bPhi^\intercal \dt{\bX}, \quad \text{and} \quad 
\bF_r = \bPhi^\intercal \bF.
\end{equation}
This projection is inspired by the state equation \eqref{eq:PH_rom_system1} of the \text{SP-G-ROM}. Specially, $\bX_r$ and $\dt{\bX}_r$ represent the POD coefficient vectors to approximate the snapshots and associated time derivatives in the POD subspace, and  
$\bF_r$ provides an approximation of the gradient of the Hamiltonian function at the reduced-order level, following the definition of $\nabla_{\bx_r}H_r(\bx_r(t))$ in \eqref{eq:gradHr}.

\subsection{Two Optimization Problems for \text{pH-OpInf}}
Based on these data generated from \eqref{eq:UYdata} and \eqref{eq:XrFrdata}, one can infer $\bJ_r$, $\bR_r$ and $\bB_r$ for the \text{pH-OpInf-ROM} in \eqref{eq:PH_rom_inf_system} through optimization. 
An iterative algorithm proposed in \cite{morandin2023port} recasts the entire coefficient matrix $\left[\begin{smallmatrix} \bJ_r-\bR_r & \bB_r \\ -\bB_r^\intercal & \mathbf{0} \end{smallmatrix} \right]$
into $\boldmath \widetilde{J}-\widetilde{R}$, and then learns the skew-symmetric matrix $\boldmath \widetilde{J}$ and the symmetric and positive semi-definite matrix $\boldmath \widetilde{R}$ iteratively.
However, using the learned $\boldmath \widetilde{J}$ and $\boldmath \widetilde{R}$ cannot ensure that the (2,2) block in the coefficient matrix is zero with machine precision. Moreover, the iterative algorithm requires a delicate initial guess to ensure convergence.
In this work, we identify the reduced operators with appropriate structural properties through a subsystem-based optimization rather than a monolithic approach.

Next, we propose and analyze two approaches.  
In both, we first determine $\bD_r$, then post process it to obtain $\bJ_r \coloneqq \frac{1}{2}(\bD_r - \bD_r^\intercal)$ and $\bR_r \coloneqq -\frac{1}{2}(\bD_r+\bD_r^\intercal)$ so that we again have $\bD_r =  \bJ_r - \bR_r$.

\paragraph{\text{pH-OpInf-W}} 
We find $\bD_r$ and $\bB_r$ simultaneously by minimizing an objective function that incorporates the residuals of the state equation \eqref{eq:PH_rom_inf_system1} and the output equation \eqref{eq:PH_rom_inf_system2} at the reduced level. The residuals are balanced using a weighting parameter $\lambda_w>0$ and we get 
\begin{flalign}
\min_{\bD_r\in \mathbb{R}^{r\times r} ,\bB_r\in \mathbb{R}^{r\times m} } \frac{1}{2} \|\dt{\bX}_r - \bD_r \bF_r-\bB_r \bU\|_F^2 +  \frac{\lambda_{w}}{2} \|{\bY}^\intercal -
\bF_r^\intercal\bB_r \|_F^2, \quad \text{ s.t. }\quad \frac{1}{2}(\bD_r+\bD_r^\intercal) \preccurlyeq \boldsymbol{0}.
\label{eq:opt}
\end{flalign}
This optimization has a semi-definite constraint and can be solved by an interior point method \cite{helmberg1996interior}. 
Operator inference implementations often use regularization terms to solve unconstrained optimization problems. However, we did not observe any additional numerical improvements for our constrained optimization in the test cases investigated in Section~\ref{sec: Num_exp}, so we did not implement regularization.

\paragraph{\text{pH-OpInf-R}}  
Since both $\bY$ and $\bF_r$ are given, the above optimization can be split into two subtasks: finding $\bB_r$ by minimizing $\|{\bY}^\intercal -\bF_r^\intercal\bB_r \|_F^2$ in the first step, and then determining $\bD_r$ by minimizing $\|\dt{\bX}_r - \bD_r \bF_r-\bB_r \bU\|_F^2$ in the second step. This leads to our second approach.
First, we find $\bB_r$ from  
\begin{flalign}
\min_{\bB_r\in \mathbb{R}^{r\times m}} \frac{1}{2} \|{\bY}^\intercal -\bF_r^\intercal\bB_r \|_F^2 + \frac{\lambda_R}{2} \|\bB_r\|_F^2,
\label{eq:opt1}
\end{flalign}
where $\lambda_R>0$ is a regularization parameter; 
second, using the obtained $\bB_r$, we optimize $\bD_r$ from
\begin{flalign}
\min_{\bD_r\in \mathbb{R}^{r\times r}} \frac{1}{2} \|\dt{\bX}_r - \bD_r \bF_r - \bB_r \bU \|_F^2, \quad \text{ s.t. }\quad \frac{1}{2}(\bD_r+\bD_r^\intercal) \preccurlyeq \mathbf{0}. 
\label{eq:opt2}
\end{flalign}
In the first step, \eqref{eq:opt1} is unconstrained, and hence can be easily solved using the least-squares method, and in the second step, \eqref{eq:opt2} is again a semi-definite constrained optimization, which can be solved using an interior point method.
\subsection{Data-driven \text{pH-OpInf-ROMs}}
In both optimization problems, since $\frac{1}{2}(\bD_r+\bD_r^\intercal) \preccurlyeq \mathbf{0}$, the inferred matrix $\bR_r$ must be positive  semi-definite; and $\bR_r$ and $\bJ_r$ are symmetric and skew symmetric, respectively, due to their definitions. After substituting them into the ROM \eqref{eq:PH_rom_inf_system}, we obtain the \text{pH-OpInf-ROMs} that are structure-preserving.   
Following the terminology of the optimization formulations, we refer to them as \text{pH-OpInf-W} and \text{pH-OpInf-R}. 

Correspondingly, after incorporating the hyper-reduction approach discussed in Section \ref{sec:hyper}, we obtain the \text{pH-OpInf-DEIM} model:
\begin{subequations}\label{eq:PH_rom_inf_DEIM_system}
\begin{empheq}[left=\empheqlbrace]{align}
\dt{\bx}_r(t) &= (\bJ_r-\bR_r)\nabla_{\bx_r} H_{h_r}(\bx_r(t)) +\bB_r \bu(t), \label{eq:PH_rom_inf_DEIM_system1}
\\[1ex]
\by_r(t) &= \bB_r^\intercal \nabla_{\bx_r} H_{h_r}(\bx_r(t)).
\label{eq:PH_rom_inf_DEIM_system2}
\end{empheq}
\end{subequations}
In all the ROMs, the initial condition \eqref{eq:ic} is employed.

\section{Error Estimates for Port-Hamiltonian Operator Inference ROMs} \label{sec: error_esti}
The numerical errors of projection-based structure-preserving ROMs have been analyzed for Hamiltonian systems in \cite{chaturantabut2012state,gong2017structure,pagliantini2023gradient} and for pH systems in \cite{chaturantabut2016structure}. 
In \cite{geng2024gradient}, error estimates for learned Hamiltonian ROMs are provided. 
In this work, we continue in a similar direction and estimate the {\em{a priori}} error of the \text{pH-OpInf-ROM} approximation relative to the FOM for both the state variable and the output.
To this end, we first define 
the Lipschitz constant and the logarithmic Lipschitz constant of a given mapping $f: \mathbb{R}^\ell\rightarrow \mathbb{R}^\ell$ as
\begin{equation*}
    \mathcal{C}_{\mathrm{Lip}}[f]:= \sup_{\bu\neq \bv}\frac{\|f(\bu)-f(\bv)\|}{\|\bu-\bv\|} 
    \quad \text{and} \quad 
    \mathcal{C}_{\mathrm{log-Lip}}[f]:= \sup_{\bu\neq \bv}\frac{\left<\bu-\bv, f(\bu)-f(\bv)\right>}{\|\bu-\bv\|^2},
\end{equation*}
where $\left<\cdot, \cdot\right>: \mathbb{R}^\ell\times \mathbb{R}^\ell \rightarrow \mathbb{R}$ for any positive integer $\ell$ denotes the Euclidean inner product. 
The logarithmic norm \cite{dahlquist1958stability} is defined as 
$$
\mu(\mathbf{A}) := \sup_{\bx\neq0}\frac{\Re\Big(\langle\bx, \mathbf{A}\bx\rangle\Big)}{\langle\bx, \bx\rangle},
$$
where $\Re (\lambda)$ gives the real part of a complex number $\lambda$. For the state approximation error, we have the following result. 

\begin{theorem}
Let $\bx(t) \in \mathbb{R}^n$ be the state variable of the FOM \eqref{eq:PH_system} on the time interval $[0, T]$ and ${\bx_{r}}(t) \in \mathbb{R}^r$ be the reduced-order state variable of the \text{pH-OpInf-DEIM} system \eqref{eq:PH_rom_inf_DEIM_system} on the same interval.  
Let $\bD_r = \bJ_r-\bR_r$ in \eqref{eq:PH_rom_inf_DEIM_system} and 
suppose that $\nabla_{\bx} H(\bx)$ and $J_h(\bx)$ are both Lipschitz continuous, then the ROM state approximation error satisfies
\begin{equation}
\begin{array}{ll}
\int_0^T \|\bx-\bPhi{\bx_r}\|^2\, { \rm d} t 
&\leq 
 C(T) \ \bigg( 
\underbrace{\int_0^T \|\bx-\bPhi\bPhi^\intercal \bx\|^2\, { \rm d}t}_{\text{ \em projection error}} 
+ 
\underbrace{\int_0^T \|\dt{\bx}-\mathcal{D}_t[\bx]\|^2\, { \rm d}t}_{\text{\em data error}} \\
&\quad + 
\underbrace{\int_0^T \|\bPhi^\intercal \mathcal{D}_t[\bx]  - \bD_r \bPhi^\intercal \nabla_{\bx} H(\bx) - \bB_r \bu(t)\|^2\, { \rm d}t}_{\text{\em optimization error}}
+
\underbrace{\int_0^T \|(\mathbf{I}-\mathbb{P})J_h(\bPhi \bPhi^\intercal \bx)\bPhi \|^2\, { \rm d}t}_{\text{\em hyper-reduction error}} 
\bigg), \label{eq:error_x_bound}
\end{array}
\end{equation}
where the constant $C(T)= \max\{1+C_2^2T \alpha(T), C_3^2T \alpha(T),T \alpha(T)\} $ with 
$$C_1= \mu\left(\bPhi \bD_r\bPhi^\intercal \bQ\right)\,+\|\bD_r\bPhi^\intercal\|\,
    \mathcal{C}_{\mathrm{Lip}}[\bJ_h]\,
    \|(\bP^\intercal \bPsi)^{-1}\|\,
    \|\bc\|,\quad C_2= \|\bD_r \bPhi^\intercal\|\, \mathcal{C}_{\mathrm{Lip}}[\nabla_{\bx}H],\quad C_3=\|\bD_r\|\ \|\bc\|,$$
    {\rm and} $\alpha(T)= 4\int_0^T e^{2C_1 (T-\tau)}\,{ \rm d} \tau.$
\label{thm:1}
\end{theorem}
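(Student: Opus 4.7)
I would begin with an orthogonal splitting of the ROM error,
$\bx - \bPhi\bx_r = (\bI - \bPhi\bPhi^\intercal)\bx + \bPhi(\bPhi^\intercal\bx - \bx_r)$.
Because $\bPhi^\intercal\bPhi = \bI_r$, the squared norm separates cleanly and the projection-error integrand of \eqref{eq:error_x_bound} appears immediately; the remainder of the proof is devoted to bounding $\int_0^T \|e_r\|^2\,{\rm d}t$ where $e_r(t) := \bPhi^\intercal\bx(t) - \bx_r(t)$.

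To obtain an ODE for $e_r$, I would left-multiply \eqref{eq:PH_system1} by $\bPhi^\intercal$, subtract \eqref{eq:PH_rom_inf_DEIM_system1}, and insert three pairs of telescoping quantities. Inserting $\pm\bPhi^\intercal\mathcal{D}_t[\bx]$ produces the data error $\bPhi^\intercal(\dt{\bx} - \mathcal{D}_t[\bx])$ and the optimization residual $\bPhi^\intercal\mathcal{D}_t[\bx] - \bD_r\bPhi^\intercal\nabla_\bx H(\bx) - \bB_r\bu$; inserting $\pm \bD_r\bPhi^\intercal\nabla_\bx H(\bPhi\bPhi^\intercal\bx)$ and invoking Lipschitz continuity of $\nabla_\bx H$ yields a projection-error term controlled by $C_2\|\bx - \bPhi\bPhi^\intercal\bx\|$; and inserting $\pm \bD_r \nabla_{\bx_r}H_{h_r}(\bPhi^\intercal\bx)$ together with $\bQ_r = \bPhi^\intercal\bQ\bPhi$ cancels the quadratic parts, leaving $\bD_r\bPhi^\intercal J_h^\intercal(\bPhi\bPhi^\intercal\bx)(\bI - \mathbb{P}^\intercal)\bc = \bD_r[(\bI - \mathbb{P})J_h(\bPhi\bPhi^\intercal\bx)\bPhi]^\intercal\bc$, whose norm is bounded by $C_3\|(\bI-\mathbb{P})J_h(\bPhi\bPhi^\intercal\bx)\bPhi\|$. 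The still $e_r$-dependent piece splits into the linear part $\bD_r\bQ_r e_r$ and a nonlinear Jacobian difference $\bD_r\bPhi^\intercal[J_h^\intercal(\bPhi\bPhi^\intercal\bx) - J_h^\intercal(\bPhi\bx_r)]\mathbb{P}^\intercal\bc$; Lipschitz continuity of $J_h$ controls the latter by the second summand of $C_1$ times $\|e_r\|$.

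For the linear part, I would form $\tfrac{\rm d}{{\rm d}t}\|e_r\|^2 = 2 e_r^\intercal \dt{e}_r$ and use the isometry $\|e_r\| = \|\bPhi e_r\|$ to rewrite $e_r^\intercal \bD_r\bQ_r e_r = (\bPhi e_r)^\intercal (\bPhi\bD_r\bPhi^\intercal\bQ)(\bPhi e_r)$ and bound it by $\mu(\bPhi\bD_r\bPhi^\intercal\bQ)\|e_r\|^2$ directly from the definition of the logarithmic norm, yielding exactly the first summand of $C_1$. Combining all bounds gives a differential inequality of the form
$\tfrac{\rm d}{{\rm d}t}\|e_r\|^2 \leq 2C_1\|e_r\|^2 + 2\|e_r\|\sum_{i=1}^4 \|g_i\|$,
with $g_1,\dots,g_4$ denoting the four forcing terms in \eqref{eq:error_x_bound}. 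Applying Gronwall's inequality with $e_r(0)=0$, using $\|\sum g_i\|^2 \leq 4\sum\|g_i\|^2$, and integrating on $[0,T]$ via Cauchy--Schwarz in time then produces the factor $\alpha(T)$ and the $\max$-structure of $C(T)$.

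The step I anticipate as most delicate is obtaining the logarithmic norm in the specific form $\mu(\bPhi\bD_r\bPhi^\intercal\bQ)$ on the \emph{full}-dimensional operator rather than $\mu(\bD_r\bQ_r)$ on the reduced one; this is made possible only by the lift-and-isometry rewriting above, and the argument must apply it exclusively to the quadratic form (the nonlinear Jacobian-difference term still needs the reduced-dimensional norm on $e_r$ for the Lipschitz estimate to give the second summand of $C_1$). A secondary bookkeeping challenge will be channeling the residuals through Young's inequality and Gronwall so that the coefficients $1 + C_2^2 T \alpha(T)$, $C_3^2 T \alpha(T)$, and $T\alpha(T)$ emerge simultaneously as the three arguments of the maximum defining $C(T)$.
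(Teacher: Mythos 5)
Your proposal follows essentially the same route as the paper's proof: the same orthogonal splitting into projection error plus $\boldsymbol{\theta}=\bPhi(\bPhi^\intercal\bx-\bx_r)$, the same telescoping insertions producing the data, optimization, Lipschitz-projection, and DEIM residual terms, the same lift-and-isometry trick to obtain $\mu(\bPhi\bD_r\bPhi^\intercal\bQ)$ on the full-dimensional operator, and the same Gronwall--Cauchy--Schwarz conclusion with the factor $4$ from bounding the square of the sum of four forcing terms. The only cosmetic difference is that you work with $\tfrac{\rm d}{{\rm d}t}\|e_r\|^2$ in reduced coordinates while the paper differentiates $\|\boldsymbol{\theta}\|$ directly; these are equivalent.
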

Four terms appear in the error bound: the first term $\int_0^T \|\bx-\bPhi\bPhi^\intercal \bx\|^2 \,{ \rm d} t$ measures the \textit{projection error} caused by projecting $\bx(t)$ onto the subspace spanned by the POD basis $\bPhi$, which has been analyzed thoroughly in \cite{kunisch2001galerkin,singler2014new}; the second one $\int_0^T \|\dt{\bx}-\mathcal{D}_t[\bx]\|^2\, { \rm d} t$ is the \textit{data error} caused by generating the time derivative snapshots using a finite difference scheme; the third one $\int_0^T\|\bPhi^\intercal \mathcal{D}_t[\bx]  - \bD_r \bPhi^\intercal \nabla_{\bx} H(\bx)- \bB_r \bu(t)\|^2\, { \rm d} t$ represents the \textit{optimization error}, which comprises the model error due to fitting the reduced-order operators $\bD_r$ and $\bB_r$ using the projected FOM data, and the numerical error arising from solving the optimization problem by numerical algorithms; and the forth one ${\int_0^T \|(\mathbf{I}-\mathbb{P})J_h(\bPhi \bPhi^\intercal \bx)\bPhi \|^2\, { \rm d}t}$ is the \textit{hyper-reduction error} caused by the DEIM approximation of the nonlinear Jacobian, which is used to accelerate the evaluation of the gradient of reduced Hamiltonian \eqref{eq:gradHhr}.
Next, we present the proof. 
\begin{proof}
Consider the \text{pH-OpInf-DEIM} model \eqref{eq:PH_rom_inf_DEIM_system} of a fixed dimension $r$, and define its approximate state error by 
\begin{equation*}
\be_{\bx}\coloneqq \bx-\bPhi {{\bx_r}},
\label{eq:e0}
\end{equation*}
which can be decomposed as $\be_{\bx} =\boldsymbol{\rho} +\boldsymbol{\theta}$ with $$\boldsymbol{\rho}\coloneqq \bx-\bPhi\bPhi^\intercal \bx,\quad \boldsymbol{\theta} \coloneqq \bPhi\bPhi^\intercal \bx-\bPhi \bx_r.$$
After defining 
$$\boldsymbol{\zeta} \coloneqq \dt{\bx} - \mathcal{D}_t[{\bx}],\quad
\boldsymbol{\eta} \coloneqq \bPhi^\intercal \mathcal{D}_t[{\bx}]  - \bD_r \bPhi^\intercal \nabla_{\bx} H(\bx) - \bB_r \bu(t), \quad\boldsymbol{\xi} \coloneqq  (\mathbf{I}-\mathbb{P})J_h(\bPhi \bPhi^\intercal \bx)\bPhi$$ 
and using equation~\eqref{eq:PH_rom_inf_DEIM_system1}, 
we have the time derivative of $\boldsymbol{\theta}$
\begin{equation}\label{eq:d_theta00_DEIM}
    \begin{aligned}
    \dt{\boldsymbol{\theta}} 
         &= \bPhi \bPhi^\intercal \dt{\bx} - \bPhi \dt{\bx}_r \\
		  &= \left(\bPhi \bPhi^\intercal\dt{\bx}  - \bPhi \bPhi^\intercal \mathcal{D}_t[\bx]\right)  \\
		  &\quad +\left( \bPhi \bPhi^\intercal \mathcal{D}_t[\bx]  - \bPhi \bD_r \bPhi^\intercal \nabla_{\bx} H(\bx)  - \bPhi \bB_r \bu(t) \right) \\
		  &\quad +\left( \bPhi \bD_r \bPhi^\intercal \nabla_{\bx} H(\bx) - \bPhi \bD_r \bPhi^\intercal \nabla_{\bx} H(\bPhi \bPhi^\intercal \bx) \right) \\
		  &\quad +\left( \bPhi \bD_r \bPhi^\intercal \nabla_{\bx} H(\bPhi \bPhi^\intercal \bx) -  \bPhi \bD_r\, [\bPhi^\intercal \bQ \bPhi\bPhi^\intercal\bx + \bPhi^\intercal J_h^\intercal(\bPhi \bPhi^\intercal \bx) \mathbb{P}^\intercal \mathbf{c}] \right) \\
            &\quad + \left( \bPhi \bD_r\,[\bPhi^\intercal \bQ \bPhi\bPhi^\intercal\bx+ \bPhi^\intercal J_h^\intercal(\bPhi \bPhi^\intercal \bx) \mathbb{P}^\intercal \mathbf{c}]- \bPhi \bD_r\,[\bPhi^\intercal \bQ \bPhi\bx_r+\bPhi^\intercal J_h^\intercal(\bPhi \bx_r)\mathbb{P}^\intercal \mathbf{c}] \right). 
    \end{aligned}        		
\end{equation}
Note that
\begin{equation}
\frac{{ \rm d}}{{ \rm d} t}\|\boldsymbol{\theta}\| 
= \frac{1}{2\|\boldsymbol{\theta}\|} \frac{{ \rm d}}{{ \rm d}t}\|\boldsymbol{\theta}\|^2
= \frac{1}{\|\boldsymbol{\theta}\|} \left<\boldsymbol{\theta},\,\dt{\boldsymbol{\theta}}\right>.
\label{eq:d_theta}
\end{equation}
By taking the inner product of equation~\eqref{eq:d_theta00_DEIM} with $\boldsymbol{\theta}$, the first four terms on the right-hand side are
\begin{align}
\left<\boldsymbol{\theta}, \bPhi \bPhi^\intercal\dt{\bx}  - \bPhi \bPhi^\intercal \mathcal{D}_t[{\bx}] \right> 
= \left<\boldsymbol{\theta}, \bPhi \bPhi^\intercal (\dt{\bx} - \mathcal{D}_t[{\bx}])\right>
 & \leq \|\boldsymbol{\theta}\|\, \|\boldsymbol{\zeta}\|,
 \label{eq:err_projH0}
\\
\left<\boldsymbol{\theta}, \bPhi \bPhi^\intercal \mathcal{D}_t[{\bx}] - \bPhi \bD_r \bPhi^\intercal \nabla_{\bx} H(\bx)  - \bPhi \bB_r \bu(t) \right> 
= \left<\boldsymbol{\theta}, \bPhi \boldsymbol{\eta}\right>
 &\leq \|\boldsymbol{\theta}\|\, \|\boldsymbol{\eta}\|,
 \label{eq:err_projH}
\\
\left<\boldsymbol{\theta}, \bPhi \bD_r \bPhi^\intercal\, \left(\nabla_{\bx} H(\bx) - \nabla_{\bx} H(\bPhi \bPhi^\intercal\bx)\right) \right>
&\leq \|\boldsymbol{\theta}\|\, \|\bD_r \bPhi^\intercal\|\, \mathcal{C}_{\mathrm{Lip}}[\nabla_{\bx}H]\, \|\boldsymbol{\rho}\|,
 \label{eq:err_proju}
\\
\left<\boldsymbol{\theta}, \bPhi \bD_r \bPhi^\intercal \nabla_{\bx} H(\bPhi \bPhi^\intercal \bx) -  \bPhi \bD_r\, [\bPhi^\intercal \bQ \bPhi\bPhi^\intercal\bx + \bPhi^\intercal J_h^\intercal(\bPhi \bPhi^\intercal \bx) \mathbb{P}^\intercal \mathbf{c}] \right>
&\leq \|\bD_r\|\, \|\bc\|\,\|\boldsymbol{\xi} \|\,\|\boldsymbol{\theta}\|.
 \label{eq:err_DEIM1}
\end{align}
For the last term, following \cite[Thm. 3.1]{chaturantabut2012state} and \cite[Thm. 3.3]{pagliantini2023gradient}, its linear part can be bounded by
\begin{equation}
    \left<\boldsymbol{\theta}, \bPhi \bD_r \bPhi^\intercal\,\bQ(\bPhi\bPhi^\intercal \bx-\bPhi \bx_r)\right>
\leq \mu\left(\bPhi \bD_r\bPhi^\intercal \bQ\right)\,\|\boldsymbol{\theta}\|^2 \label{eq:err_DEIM2}
\end{equation}
and the nonlinear part is bounded by
\begin{equation}
    \left<\boldsymbol{\theta}, \bPhi \bD_r \bPhi^\intercal\,\left(J_h^\intercal(\bPhi \bPhi^\intercal \bx) -J_h^\intercal(\bPhi \bx_r) \right) \mathbb{P}^\intercal\bc\right> 
    \leq \|\bD_r\bPhi^\intercal\|\,
    \mathcal{C}_{\mathrm{Lip}}[\bJ_h]\,
    \|(\bP^\intercal \bPsi)^{-1}\|\,
    \|\bc\|\,\|\boldsymbol{\theta}\|^2.  \label{eq:err_DEIM3}
\end{equation}

Combining \eqref{eq:d_theta} with \eqref{eq:d_theta00_DEIM} and \eqref{eq:err_projH0}-\eqref{eq:err_DEIM3}, we have
\begin{equation*}
\frac{{ \rm d}}{{ \rm d} t}\|\boldsymbol{\theta}\| \leq C_1 \|\boldsymbol{\theta}\| + C_2 \|\boldsymbol{\rho}\| +C_3 \|\boldsymbol{\xi}\|+ \|\boldsymbol{\zeta}\| + \|\boldsymbol{\eta}\|.
\label{eq:d_theta0}
\end{equation*}
By the classical differential version of Gronwall lemma over the interval $[0, t]$, for any $t\in [0, T]$, we get
\begin{equation*}
\|\boldsymbol{\theta}(t)\| \leq \int_0^t e^{C_1 (t-\tau)} \left(C_2 \|\boldsymbol{\rho}\|+C_3 \|\boldsymbol{\xi}\| + \|\boldsymbol{\zeta}\| + \|\boldsymbol{\eta}\|\right)\, { \rm d} \tau,
\label{eq:d_theta1}
\end{equation*}
in which the fact that $\boldsymbol{\theta}(0)=\mathbf{0}$ is used, ensured by the initial condition \eqref{eq:ic}.
Applying the Cauchy-Schwarz inequality to the RHS and squaring both sides, we have 
\begin{align*}
\|\boldsymbol{\theta}(t)\|^2
& \leq \int_0^t e^{2C_1 (t-\tau)} { \rm d} \tau \int_0^t \left(C_2 \|\boldsymbol{\rho}\| +C_3 \|\boldsymbol{\xi}\|+ \|\boldsymbol{\zeta}\| + \|\boldsymbol{\eta}\|\right)^2\, { \rm d} \tau ,\\
& \leq \alpha(T)\left(C_2^2 \int_0^T \|\boldsymbol{\rho}\|^2 { \rm d} t+C_3^2 \int_0^T \|\boldsymbol{\xi}\|^2 { \rm d} t+ \int_0^T \left(\|\boldsymbol{\zeta}\|^2 + \|\boldsymbol{\eta}\|^2\right)\, { \rm d} t\right),
\label{eq:d_theta2}
\end{align*}
where $\alpha(T)= 4\int_0^T e^{2C_1 (t-\tau)}\,  { \rm d} \tau$.
Hence,
\begin{equation*}
\int_0^T\|\boldsymbol{\theta}(t)\|^2\, { \rm d} t \leq T \alpha(T)\left(C_2^2 \int_0^T \|\boldsymbol{\rho}\|^2 { \rm d} t +C_3^2 \int_0^T \|\boldsymbol{\xi}\|^2 { \rm d} t+ \int_0^T \left(\|\boldsymbol{\zeta}\|^2 + \|\boldsymbol{\eta}\|^2\right)\, { \rm d} t\right).
\label{eq:d_theta3}
\end{equation*}
This, together with the orthogonality of $\boldsymbol{\rho}$ and $\boldsymbol{\theta}$, yields
$$
\int_0^T \|\be_x(t)\|^2\, { \rm d} t \leq \left(1+ C_2^2 T \alpha(T)\right) \int_0^T \|\boldsymbol{\rho}\|^2 { \rm d} t+C_3^2T \alpha(T) \int_0^T \|\boldsymbol{\xi}\|^2{ \rm d} t+T \alpha(T) \int_0^T \left(\|\boldsymbol{\zeta}\|^2 + \|\boldsymbol{\eta}\|^2\right)\, { \rm d} t,
$$
which proves the theorem. 
\end{proof}

Consequently, we can show the following estimate for the output approximation error.
\begin{theorem}
Let $\by(t) \in \mathbb{R}^m$ be the output of the FOM \eqref{eq:PH_system} on the time interval $[0, T]$ and ${\by_r}(t) \in \mathbb{R}^m$ be the output of the \text{pH-OpInf-DEIM} system \eqref{eq:PH_rom_inf_DEIM_system} on the same interval.
Assume that $\nabla_{\bx} H(\bx)$ and $\bJ_h(\bx)$ are both Lipschitz continuous, then the output approximation error satisfies
\begin{equation}
\int_0^T \|\by-{\by_r}\|^2\, { \rm d} t 
\leq  
C \ \bigg( 
\underbrace{\int_0^T \|\bx-\bPhi \bx_r\|^2\, { \rm d}t}_{\text{ \em state approximation error}} 
+
\underbrace{\int_0^T \|{\by}-\bB_r^\intercal \bPhi^\intercal \nabla_{\bx} H(\bx)\|^2\, { \rm d}t}_{\text{\em optimization error}}
+
\underbrace{\int_0^T \|(\mathbf{I}-\mathbb{P})J_h(\bPhi \bPhi^\intercal \bx)\bPhi \|^2\, { \rm d}t}_{\text{\em hyper-reduction error}} \bigg),
\label{eq:error_y_bound}
\end{equation}
where $C =4\max \{1, C_4^2, C_5^2, C_6^2 \}$ with the constants 
$$C_4 = \|\bB_r^\intercal \bPhi^\intercal\|\mathcal{C}_{\mathrm{Lip}}[\nabla_{\bx}H],\quad C_5=\|\bB_r\|\, \|\bc\|,\quad {\rm and }\quad  C_6=\| \bB_r^\intercal\bPhi^\intercal\|\, \left(\|\bQ\|\,+\mathcal{C}_{\mathrm{Lip}}[J_h] \|(\bP^\intercal \bPsi)^{-1}\|\,
    \|\bc\|\,\right).$$
\label{thm:2}
\end{theorem}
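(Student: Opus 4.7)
\medskip

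\noindent\textbf{Proof proposal.} Since the output depends pointwise on the state through $\nabla_{\bx_r} H_{h_r}$, no Gronwall argument is needed here: the plan is to bound $\by(t)-\by_r(t)$ pointwise by telescoping through three intermediate vectors chosen to isolate, in turn, the optimization residual, the Lipschitz mismatch of $\nabla_\bx H$, the DEIM interpolation residual, and the coefficient error $\boldsymbol{\theta}=\bPhi\bPhi^\intercal\bx-\bPhi\bx_r$; then I square, integrate, and absorb both $\boldsymbol{\rho}$ and $\boldsymbol{\theta}$ contributions into the single state-approximation-error slot of \eqref{eq:error_y_bound}.

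Concretely, using \eqref{eq:gradHhr} and $\bQ_r=\bPhi^\intercal\bQ\bPhi$, we have $\by_r=\bB_r^\intercal\bPhi^\intercal[\bQ\bPhi\bx_r+J_h^\intercal(\bPhi\bx_r)\mathbb{P}^\intercal\bc]$. I would insert
$$
\by^{(1)} := \bB_r^\intercal\bPhi^\intercal\nabla_{\bx}H(\bx),\qquad
\by^{(2)} := \bB_r^\intercal\bPhi^\intercal\nabla_{\bx}H(\bPhi\bPhi^\intercal\bx),
$$
$$
\by^{(3)} := \bB_r^\intercal\bPhi^\intercal\bigl[\bQ\bPhi\bPhi^\intercal\bx + J_h^\intercal(\bPhi\bPhi^\intercal\bx)\,\mathbb{P}^\intercal\bc\bigr],
$$
and write $\by-\by_r=(\by-\by^{(1)})+(\by^{(1)}-\by^{(2)})+(\by^{(2)}-\by^{(3)})+(\by^{(3)}-\by_r)$. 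The first piece is precisely the optimization residual in \eqref{eq:error_y_bound}. The second is bounded by $C_4\|\boldsymbol{\rho}\|$ via Lipschitz continuity of $\nabla_{\bx}H$. For the third, substituting $\nabla_\bx H(\bz)=\bQ\bz+J_h^\intercal(\bz)\bc$ at $\bz=\bPhi\bPhi^\intercal\bx$ makes the $\bQ$-part cancel, leaving $\bB_r^\intercal\bPhi^\intercal J_h^\intercal(\bPhi\bPhi^\intercal\bx)(\mathbf{I}-\mathbb{P}^\intercal)\bc=\bB_r^\intercal[(\mathbf{I}-\mathbb{P})J_h(\bPhi\bPhi^\intercal\bx)\bPhi]^\intercal\bc$, with norm at most $C_5\|\boldsymbol{\xi}\|$. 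The fourth splits as $\bB_r^\intercal\bPhi^\intercal\bQ\,\boldsymbol{\theta}+\bB_r^\intercal\bPhi^\intercal[J_h^\intercal(\bPhi\bPhi^\intercal\bx)-J_h^\intercal(\bPhi\bx_r)]\mathbb{P}^\intercal\bc$ and is bounded by $C_6\|\boldsymbol{\theta}\|$ using the Lipschitz constant of $J_h$ together with $\|\mathbb{P}^\intercal\|\le\|(\bP^\intercal\bPsi)^{-1}\|$ (since $\bPsi$ has orthonormal columns and $\bP$ is a selector).

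Combining these four pointwise bounds through the triangle inequality, squaring with $(\sum_{i=1}^4 a_i)^2\le 4\sum_{i=1}^4 a_i^2$, integrating over $[0,T]$, and invoking the orthogonal decomposition $\|\bx-\bPhi\bx_r\|^2=\|\boldsymbol{\rho}\|^2+\|\boldsymbol{\theta}\|^2$ so that both $\|\boldsymbol{\rho}\|^2$ and $\|\boldsymbol{\theta}\|^2$ are pointwise dominated by $\|\bx-\bPhi\bx_r\|^2$, yields \eqref{eq:error_y_bound} with $C=4\max\{1,C_4^2,C_5^2,C_6^2\}$.

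The only genuine obstacle is the algebraic bookkeeping in step~(c): one must rewrite $J_h^\intercal(\bPhi\bPhi^\intercal\bx)(\mathbf{I}-\mathbb{P}^\intercal)\bc$ in the form $[(\mathbf{I}-\mathbb{P})J_h(\bPhi\bPhi^\intercal\bx)\bPhi]^\intercal\bc$ via insertion of $\bPhi^\intercal\bPhi=\mathbf{I}_r$ and transposition, so that the hyper-reduction residual appears in exactly the form written in \eqref{eq:error_y_bound}. Everything else follows Lipschitz and operator-norm estimates analogous to those used in Theorem~\ref{thm:1}.
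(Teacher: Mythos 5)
Your proposal is correct and follows essentially the same route as the paper: the paper's proof uses exactly your four-term telescoping decomposition through $\bB_r^\intercal\bPhi^\intercal\nabla_{\bx}H(\bx)$, $\bB_r^\intercal\bPhi^\intercal\nabla_{\bx}H(\bPhi\bPhi^\intercal\bx)$, and the DEIM-interpolated evaluation at $\bPhi\bPhi^\intercal\bx$, bounds the pieces by $\|\boldsymbol{\varphi}\|$, $C_4\|\boldsymbol{\rho}\|$, $C_5\|\boldsymbol{\xi}\|$, $C_6\|\boldsymbol{\theta}\|$, and then squares, integrates, and absorbs $\boldsymbol{\rho}$ and $\boldsymbol{\theta}$ into the state error via their orthogonality. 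The algebraic rewriting you flag as the "only genuine obstacle" goes through exactly as you describe, so there is no gap.
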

Three terms appear in this error bound: 
the first term $\int_0^T \|\bx-\bPhi \bx_r\|^2\, { \rm d}t$ represents the \textit{state approximation error}, which is estimated in Theorem~\ref{thm:1}; 
the second one $\int_0^T  \|{\by}-\bB_r^\intercal \bPhi^\intercal \nabla_{\bx} H(\bx)\|^2\, { \rm d}t$ is the \textit{optimization error} caused by inferring $\bB_r$ from the optimization problems; 
and the third one $\int_0^T \|(\mathbf{I}-\mathbb{P})J_h(\bPhi \bPhi^\intercal \bx)\bPhi \|^2\, { \rm d}t$ is the same \textit{hyper-reduction error} due to the DEIM interpolation of the nonlinear Jacobian of the reduced Hamiltonian.
Next, we present the proof.
\begin{proof}
Consider the \text{pH-OpInf-DEIM} model \eqref{eq:PH_rom_inf_DEIM_system} of a fixed dimension $r$, and define its output approximation error by 
\begin{equation*}
\be_{\by} \coloneqq \by-{\by_r}.
\label{eq:ey0}
\end{equation*}
By equation~\eqref{eq:PH_rom_inf_DEIM_system2}, it can be equivalently rewritten as 
\begin{equation}
\begin{aligned}
{\be_{\by}} &= \by- \bB_r^\intercal \nabla_{\bx_r} H_{h_r}(\bx_r) \\
        &= \by  - {\bB_r}^\intercal {\bPhi}^\intercal \nabla_{\bx} H(\bx) \\
        &+ \bB_r^\intercal \bPhi^\intercal \nabla_{\bx} H(\bx) - \bB_r^\intercal\bPhi^\intercal  \nabla_{{\bx}} H(\bPhi\bPhi^\intercal{\bx}) \\
        &+\bB_r^\intercal[\bPhi^\intercal \bQ \bPhi\bPhi^\intercal{\bx} + \bPhi^\intercal J_h^\intercal(\bPhi\bPhi^\intercal{\bx})\mathbf{c}] -  \bB_r^\intercal [\bPhi^\intercal \bQ \bPhi\bPhi^\intercal{\bx}+ \bPhi^\intercal J_h^\intercal(\bPhi\bPhi^\intercal{\bx}) \mathbb{P}^\intercal \mathbf{c}] \\
        &+\bB_r^\intercal [\bPhi^\intercal \bQ \bPhi\bPhi^\intercal{\bx}+ \bPhi^\intercal J_h^\intercal(\bPhi\bPhi^\intercal{\bx}) \mathbb{P}^\intercal \mathbf{c}] - \bB_r^\intercal [\bPhi^\intercal \bQ \bPhi\bx_r + \bPhi^\intercal J_h^\intercal(\bPhi \bx_r) \mathbb{P}^\intercal \mathbf{c}]. 
\end{aligned}
\label{eq:d_ey00_DEIM}
\end{equation}
Defining 
$$\boldsymbol{\varphi} \coloneqq {\by}-\bB_r^\intercal \bPhi^\intercal \nabla_{\bx} H(\bx),\quad
\boldsymbol{\rho}\coloneqq \bx-\bPhi\bPhi^\intercal \bx,\quad\boldsymbol{\xi} \coloneqq (\bI-\mathbb{P})J_h(\bPhi\bPhi^\intercal\bx) \bPhi,\quad  \boldsymbol{\theta} \coloneqq \bPhi\bPhi^\intercal \bx-\bPhi \bx_r,$$
we then have 
\begin{equation*}
\|\be_{\by}\| \leq \|\boldsymbol{\varphi}\|\,+ C_4 \|\boldsymbol{\rho}\| + C_5 \|\boldsymbol{\xi}\|+ C_6 \|\boldsymbol{\theta}\|,
\label{eq:d_ey1_DEIM}
\end{equation*}
for any $t \in [0,T]$.
Hence,
$$
\|\be_{\by}\|^2 \leq 4 \left(C_4^2 \|\boldsymbol{\rho}\|^2 + C_6^2 \|\boldsymbol{\theta} \|^2 + \|\boldsymbol{\varphi}\|^2 + C_5^2 \|\boldsymbol{\xi}\|^2 \right).
$$
After integrating both sides from $0$ to $T$, using
$\be_x = \boldsymbol{\rho}+\boldsymbol{\theta}$ and the orthogonality of $\boldsymbol{\rho}$ and $\boldsymbol{\theta}$, we get 
$$
\int_0^T \|\be_{\by}\|^2\, { \rm d} t \leq 4 \left( \text{max}(C_4^2,C_6^2)\int_0^T \|{\be_x}\|^2\, { \rm d} t +
 \int_0^T\|\boldsymbol{\psi}\|^2\,{\rm d} t + C_5^2  \int_0^T \|\boldsymbol{\xi}\|^2\, { \rm d} t\right).
$$
This proves the theorem.
\end{proof}

When hyper-reduction is not applied, the \text{pH-OpInf-DEIM} model coincides with the \text{pH-OpInf-ROM}, and we have the following result. 
\begin{corollary}
    Let $\bx(t)$ and $\by(t)$ be the state and output of the FOM \eqref{eq:PH_system}, respectively, on the time interval $[0, T]$ and ${\bx_{r}}(t)$ and ${\by_{r}}(t)$ be the reduced-order state and output, respectively, of the \text{pH-OpInf-ROM} defined in \eqref{eq:PH_rom_inf_system} over the same interval.  
Let $\bD_r = \bJ_r-\bR_r$ in \eqref{eq:PH_rom_inf_system} and suppose that $\nabla_{\bx} H(\bx)$ and $J_h(\bx)$ are both Lipschitz continuous, then the ROM state approximation error satisfies
\begin{equation}
\label{eq:c1_r1}
\begin{aligned}
\int_0^T \|\bx-\bPhi{\bx_r}\|^2\, { \rm d} t 
\leq 
 \widehat C(T) \ \bigg( 
\underbrace{\int_0^T \|\bx-\bPhi\bPhi^\intercal \bx\|^2\, { \rm d}t}_{\text{ \em projection error}} 
&+ 
\underbrace{\int_0^T \|\dt{\bx}-\mathcal{D}_t[\bx]\|^2\, { \rm d}t}_{\text{\em data error}}
\\
&+ 
\underbrace{\int_0^T \|\bPhi^\intercal \mathcal{D}_t[\bx]  - \bD_r \bPhi^\intercal \nabla_{\bx} H(\bx) - \bB_r \bu(t)\|^2\, { \rm d}t}_{\text{\em optimization error}} 
\bigg),
\end{aligned}
\end{equation}
where
the constant $\widehat C(T) = \max\{1+C_8^2T \alpha(T),T \alpha(T)\} $ with 
$$C_7= \mathcal{C}_{\mathrm{log-Lip}}[\bPhi \bD_r \bPhi^\intercal\nabla_{\bx}H],\quad
C_8= \|\bD_r \bPhi^\intercal\|\, \mathcal{C}_{\mathrm{Lip}}[\nabla_{\bx}H],\quad {\rm and } \quad
\alpha(T)= 3\int_0^T e^{2C_7 (T-\tau)}\,{ \rm d} \tau.$$
The corresponding output approximation error satisfies
\begin{equation}
\label{eq:c1_r2}
\int_0^T \|\by-{\by_r}\|^2\, { \rm d} t 
\leq  
\widehat C \ \bigg( 
\underbrace{\int_0^T \|\bx-\bPhi \bx_r\|^2\, { \rm d}t}_{\text{ \em state approximation error}}  
+ 
\underbrace{\int_0^T \|{\by}-\bB_r^\intercal \bPhi^\intercal \nabla_{\bx} H(\bx)\|^2\, { \rm d}t}_{\text{\em optimization error}} \bigg),
\end{equation}
where the constant $\widehat C =2 \max \{1, C_9^2\}$ with $C_9 = \|\bB_r^\intercal \bPhi^\intercal\|\mathcal{C}_{\mathrm{Lip}}[\nabla_{\bx}H]\,$.
\label{corollary1}
\end{corollary}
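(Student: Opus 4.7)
The plan is to specialize the proofs of Theorems~\ref{thm:1} and~\ref{thm:2} to the no-hyper-reduction setting, since the \text{pH-OpInf-ROM} is exactly \eqref{eq:PH_rom_inf_DEIM_system} with $\mathbb{P}$ replaced by the identity and $\nabla_{\bx_r}H_{h_r}(\bx_r)=\bPhi^\intercal\nabla_{\bx}H(\bPhi\bx_r)$. This collapses the quadratic/nonlinear splitting of the Hamiltonian gradient that was needed to accommodate DEIM interpolation into a single Lipschitz difference, and makes the hyper-reduction term $\boldsymbol{\xi}$ disappear. These two simplifications are precisely what replace the constants $C_1,C_2,C_3$ and the factor $4$ in $\alpha(T)$ by $C_7,C_8$ and a factor $3$, and reduce the output error bound from three contributions to two.

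For the state estimate \eqref{eq:c1_r1}, I would retain the orthogonal decomposition $\be_x=\boldsymbol{\rho}+\boldsymbol{\theta}$ with $\boldsymbol{\rho}=\bx-\bPhi\bPhi^\intercal\bx$ and $\boldsymbol{\theta}=\bPhi\bPhi^\intercal\bx-\bPhi\bx_r$, and reprise \eqref{eq:d_theta00_DEIM} using \eqref{eq:PH_rom_inf_system1}. The decomposition now has four pieces: the data residual $\bPhi\bPhi^\intercal(\dt{\bx}-\mathcal{D}_t[\bx])$, the optimization residual $\bPhi\boldsymbol{\eta}$, the projection-induced gradient mismatch $\bPhi\bD_r\bPhi^\intercal(\nabla_{\bx}H(\bx)-\nabla_{\bx}H(\bPhi\bPhi^\intercal\bx))$, and the self-interaction term $\bPhi\bD_r\bPhi^\intercal(\nabla_{\bx}H(\bPhi\bPhi^\intercal\bx)-\nabla_{\bx}H(\bPhi\bx_r))$. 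Pairing with $\boldsymbol{\theta}$ and applying Cauchy--Schwarz on the first three contributions gives bounds $\|\boldsymbol{\zeta}\|\|\boldsymbol{\theta}\|$, $\|\boldsymbol{\eta}\|\|\boldsymbol{\theta}\|$, and $C_8\|\boldsymbol{\rho}\|\|\boldsymbol{\theta}\|$. The critical step is to bound the self-interaction directly via the log-Lipschitz constant of the mapping $f\coloneqq \bPhi\bD_r\bPhi^\intercal\nabla_{\bx}H$ evaluated at $\bu=\bPhi\bPhi^\intercal\bx$ and $\bv=\bPhi\bx_r$: since $\bu-\bv=\boldsymbol{\theta}$, the inner product is bounded by $C_7\|\boldsymbol{\theta}\|^2$. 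Using \eqref{eq:d_theta} to divide by $\|\boldsymbol{\theta}\|$ yields
$$\frac{{\rm d}}{{\rm d}t}\|\boldsymbol{\theta}\|\leq C_7\|\boldsymbol{\theta}\|+C_8\|\boldsymbol{\rho}\|+\|\boldsymbol{\zeta}\|+\|\boldsymbol{\eta}\|,$$
after which the Gronwall lemma on $[0,t]$ with $\boldsymbol{\theta}(0)=\mathbf{0}$, followed by squaring, Cauchy--Schwarz, and the inequality $(a+b+c)^2\leq 3(a^2+b^2+c^2)$ (which accounts for the factor $3$ in the new $\alpha(T)$), produces an $L^2_t$ bound on $\boldsymbol{\theta}$. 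Combining with $\int_0^T\|\be_x\|^2\,{\rm d}t=\int_0^T\|\boldsymbol{\rho}\|^2\,{\rm d}t+\int_0^T\|\boldsymbol{\theta}\|^2\,{\rm d}t$ by orthogonality delivers \eqref{eq:c1_r1} with $\widehat{C}(T)=\max\{1+C_8^2T\alpha(T),\,T\alpha(T)\}$.

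For \eqref{eq:c1_r2}, the shortened output decomposition obtained from \eqref{eq:PH_rom_inf_system2} is
$$\be_y=\bigl(\by-\bB_r^\intercal\bPhi^\intercal\nabla_{\bx}H(\bx)\bigr)+\bB_r^\intercal\bPhi^\intercal\bigl(\nabla_{\bx}H(\bx)-\nabla_{\bx}H(\bPhi\bx_r)\bigr)=\boldsymbol{\varphi}+\bB_r^\intercal\bPhi^\intercal\bigl(\nabla_{\bx}H(\bx)-\nabla_{\bx}H(\bPhi\bx_r)\bigr).$$
Bounding the second summand by $C_9\|\be_x\|$ via Lipschitz continuity of $\nabla_{\bx}H$, squaring with $(a+b)^2\leq 2(a^2+b^2)$, and integrating in $t$ yields \eqref{eq:c1_r2} with $\widehat{C}=2\max\{1,C_9^2\}$. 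I expect the main technical subtlety throughout to be the log-Lipschitz step in the state estimate: because $\mathcal{C}_{\mathrm{log-Lip}}[f]$ can be substantially smaller than $\|\bD_r\bPhi^\intercal\|\,\mathcal{C}_{\mathrm{Lip}}[\nabla_{\bx}H]$ (and even negative whenever the optimization enforces a dissipative $\bD_r$), using $C_7$ rather than $C_8$ in the exponent of the Gronwall factor is what yields a sharp and physically meaningful bound that is consistent with the pH structure encoded in the semi-definite constraint on $\frac{1}{2}(\bD_r+\bD_r^\intercal)$.
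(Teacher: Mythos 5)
Your proposal follows essentially the same route as the paper: the same orthogonal decomposition $\be_{\bx}=\boldsymbol{\rho}+\boldsymbol{\theta}$, the same replacement of the three DEIM inequalities by a single log-Lipschitz bound on the self-interaction term $\bPhi\bD_r\bPhi^\intercal\left(\nabla_{\bx}H(\bPhi\bPhi^\intercal\bx)-\nabla_{\bx}H(\bPhi\bx_r)\right)$, the same Gronwall/Cauchy--Schwarz/orthogonality chain with the factor $3$ in $\alpha(T)$, and the same two-term output decomposition. Your statement of the key inequality, $\left<\boldsymbol{\theta},\,f(\bPhi\bPhi^\intercal\bx)-f(\bPhi\bx_r)\right>\leq C_7\|\boldsymbol{\theta}\|^2$ with $f=\bPhi\bD_r\bPhi^\intercal\nabla_{\bx}H$ and $\bPhi\bPhi^\intercal\bx-\bPhi\bx_r=\boldsymbol{\theta}$, is in fact the form needed for the Gronwall step; the paper's displayed version of this bound writes $\|\boldsymbol{\theta}\|\,\mathcal{C}_{\mathrm{log-Lip}}[f]\,\|\boldsymbol{\rho}\|$, which appears to be a typo.
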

\begin{proof}
    Following the same steps outlined in Theorem~\ref{thm:1} but replacing inequalities \eqref{eq:err_DEIM1}, \eqref{eq:err_DEIM2}, and \eqref{eq:err_DEIM3} with the inequality
    \begin{equation*}
     \left<\boldsymbol{\theta}, \bPhi \bD_r \bPhi^\intercal\, \left(\nabla_{\bx} H(\bPhi\bPhi^\intercal\bx) - \nabla_{\bx} H(\bPhi\bx_r)\right) \right>
    \leq \|\boldsymbol{\theta}\|\, \mathcal{C}_{\mathrm{Log-Lip}}[\bPhi \bD_r \bPhi^\intercal\nabla_{\bx}H]\, \|\boldsymbol{\rho}\|,
    \label{eq:errx_without_DEIM}
    \end{equation*}
    we obtain the state approximation error in \eqref{eq:c1_r1}. 
    Then using the same argument as in Theorem~\ref{thm:2} and changing the equation \eqref{eq:d_ey00_DEIM} to  
    \begin{equation*}
\begin{aligned}
{\be_{\by}} &= \by- \bB_r^\intercal \nabla_{\bx_r} H_{r}(\bx_r) = \by  - {\bB_r}^\intercal {\bPhi}^\intercal \nabla_{\bx} H(\bx) 
        +\bB_r^\intercal \bPhi^\intercal \nabla_{\bx} H(\bx) - \bB_r^\intercal\bPhi^\intercal  \nabla_{{\bx}} H(\bPhi{\bx_r}),
\end{aligned}
\label{eq:d_ey00}
\end{equation*}
we obtain the inequality
\begin{equation*}
\|\be_{\by}\| \leq \|\boldsymbol{\varphi}\|\,+  \|\bB_r^\intercal \bPhi^\intercal\|\mathcal{C}_{\mathrm{Lip}} [\nabla_{\bx}H]\|\be_\bx\|,
\label{eq:d_ey1}
\end{equation*} 
and further the output approximation error in \eqref{eq:c1_r2}.
This completes the proof. 
\end{proof}

\begin{remark}
The re-projection method has been introduced in \cite{peherstorfer2020sampling} to reduce optimization errors for operator inference. 
When the FOM drift terms are polynomial and the time derivative data is discretized in the same manner as the FOM, the re-projection approach ensures that the inferred ROM recovers the intrusive Galerkin-projection ROM and the optimization error is zero. However, this does not hold for port-Hamiltonian systems. Suppose the re-projection method generates a sequence of approximate state vectors $\{\widetilde{\bx}_0, \widetilde{\bx}_1, \ldots, \widetilde{\bx}_s\}$, in which $\widetilde{\bx}_{n+1}$ is obtained from \eqref{eq:PH_system1} after approximating the time derivative by $\mathcal{D}_t$ and taking $\widetilde{\bx}_n$ as the initial data. Then it satisfies   
\begin{equation}\label{eq:reproj_1}
     \mathcal{D}_t[\bPhi^\intercal\,\widetilde{\bx}_n]=\bPhi^\intercal \bD \nabla_{\bx} H(\bPhi \bPhi^\intercal\, \widetilde{\bx}_n) +\bPhi^\intercal \bB \bu_n.
\end{equation}
Define $\widetilde{\mathbf{X}} \coloneqq \bPhi\bPhi^\intercal \left[\widetilde{\bx}_0, \widetilde{\bx}_1, \ldots, \widetilde{\bx}_s \right] \in \mathbb{R}^{r\times (s+1)}$. Assuming that $\bB_r = \bPhi^\intercal \bB$ has been obtained, we next find $\bD_r\in \mathbb{R}^{r\times r}$ by minimizing  
$$
\frac{1}{2} \left\|\bPhi^\intercal \,  \mathcal{D}_t\left[\widetilde{\bX}\right] - \bD_r \bPhi^\intercal \nabla_{\bx}H\left(\widetilde{\bX}\right) - \bB_r \bU \right\|_F^2 \quad \text{s.t.} \quad \frac{1}{2}(\bD_r+\bD_r^\intercal) \preccurlyeq \mathbf{0}.
$$
Because of the relation in \eqref{eq:reproj_1} and $\mathcal{D}_t[\bPhi^\intercal\,\widetilde{\bX}] = 
\bPhi^\intercal\, \mathcal{D}_t[\widetilde{\bX}]$, it is equivalent to minimize 
$$\frac{1}{2} \left\|\bPhi^\intercal \bD \nabla_{\bx} H\left(\widetilde{\bX}\right) - \bD_r \bPhi^\intercal \nabla_{\bx}H\left(\widetilde{\bX}\right)  \right\|_F^2 
\quad \text{s.t.} \quad \frac{1}{2}(\bD_r+\bD_r^\intercal) \preccurlyeq \mathbf{0}.
$$ 
In general, the minimizer $\bD_r$ solved from the optimization can not reduce the objective function to zero. 
\end{remark}

\section{Numerical Experiments} \label{sec: Num_exp}
In this section, we provide two numerical experiments to demonstrate the effectiveness and qualitative features of the proposed
method. In Section \ref{sec:num_errors} we define the error metrics used to evaluate the quality and accuracy of the \text{pH-OpInf-ROM} and the \text{pH-OpInf-DEIM} model. 
A \textit{linear} mass-spring-damper system \cite{polyuga2010model,morandin2023port} is studied in Section \ref{sec:linear case}. As the system lacks a nonlinearity, the \text{pH-OpInf-ROM} will be applied and investigated. The \textit{nonlinear} Toda lattice model \cite{chaturantabut2016structure}, which describes a chain of particles with exponential interactions between neighboring particles, is tested in Section \ref{sec:nonlinear case}. Due to nonlinearities in this system, the \text{pH-OpInf-DEIM} model will be employed and tested.

\subsection{Error Measures}\label{sec:num_errors}
To test the ROMs' accuracy, we compute the reduced-order approximation errors for both state variable $\bx$ and output $\by$, as functions of the dimension $r$ of the ROMs: 
\begin{equation}
    \mathcal{E}_\bx(r) := \sqrt{\frac{T}{N} \sum_{k=1}^N \left\|\bx(t^k)-\bPhi\bx_r(t^k)\right\|^2}, 
   \label{eq:e_x_approx}
\end{equation}
\begin{equation}
    \mathcal{E}_\by(r) := \sqrt{\frac{T}{N} \sum_{k=1}^N \left\|\by(t^k)-\by_r(t^k)\right\|^2}, 
   \label{eq:e_y_approx}
\end{equation}
where $\bx(t^k)$, $\by(t^k)$ are full-order solutions and $\bx_r(t^k)$, $\by_r(t^k)$ are reduced-order solutions, for $k = 1,2,\dots, N$, and $\bPhi$ is the POD basis matrix. 
To further illustrate the error analysis in \eqref{eq:error_x_bound} and \eqref{eq:error_y_bound}, we evaluate the \textit{projection error}, \textit{optimization error} and \textit{hyper-reduction error} as 
\begin{align}
\mathcal{E}_{\text{proj}_\bx}(r) & := \sqrt{\frac{T}{N} \sum_{k=1}^N \left\|\bx(t^k)-\bPhi\bPhi^\intercal \bx(t^k)\right\|^2}, \label{eq:e_proj_x} \\
\mathcal{E}_{\text{proj}_{\nabla H}}(r) & := \sqrt{\frac{T}{N} \sum_{k=1}^N \left\|\nabla_{\bx} H(\bx(t^k))-\bPhi\bPhi^\intercal \nabla_{\bx} H(\bx(t^k))\right\|^2}, \label{eq:e_proj_y} \\
\mathcal{E}_{\text{opt}_\bx}(r) & := \sqrt{\frac{T}{N} \sum_{k=1}^N \left\|\bPhi^\intercal \mathcal{D}_t[\bx(t^k)]-( \bJ_r - \bR_r) \bPhi^\intercal \nabla_{\bx} H(\bx(t^k)) - \bB_r \bu(t^k)\right\|^2},\label{eq:e_opt_x} \\
\mathcal{E}_{\text{opt}_\by}(r) &:= \sqrt{\frac{T}{N} \sum_{k=1}^N \left\| \by(t^k) - \bB_r^\intercal \bPhi^\intercal \nabla_\bx H(\bx(t^k))\right\|^2},
\label{eq:e_opt_y}
\\
\mathcal{E}_{\text{DEIM}}(r) &:= \sqrt{\frac{T}{N} \sum_{k=1}^N \left\| (\mathbf{I}-\mathbb{P})J_h(\bPhi \bPhi^\intercal \bx(t^k))\bPhi \right\|^2}.
\label{eq:e_DEIM}
\end{align}
In numerical tests, the FOM solutions are simulated in the time interval $[0,T_{\text{FOM}}]$, while $[0,T_{\text{ROM}}]$ in the ROM simulations. 
Furthermore, we utilize the optimization software MOSEK  \cite{aps2019mosek}, under an academic license, to solve the optimization problems via the open-source CVXPY library \cite{diamond2016cvxpy,agrawal2018rewriting}. 

\subsection{Linear Case: Mass-Spring-Damper System}\label{sec:linear case}
\begin{figure}[!ht]
    \centering
\includegraphics[width=0.8\linewidth]{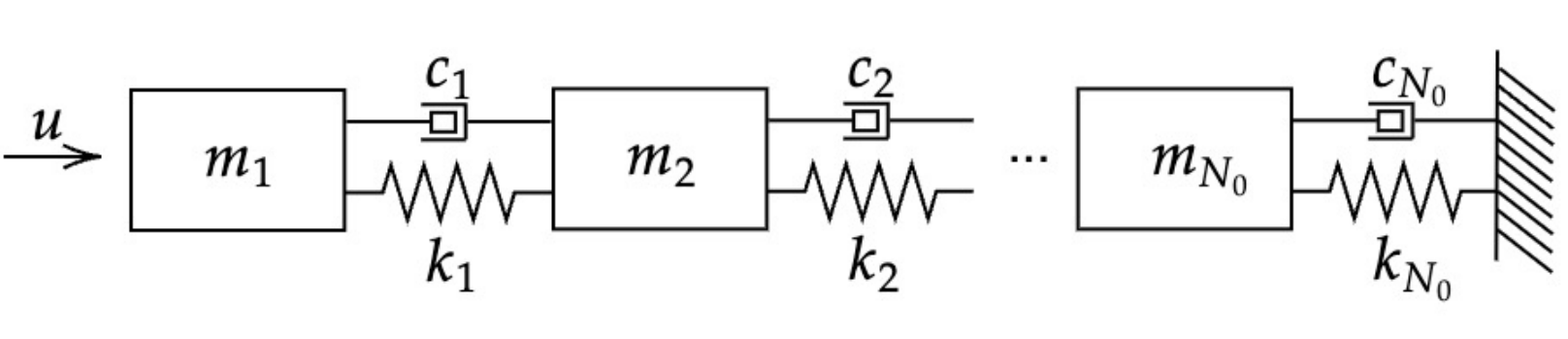}
    \caption{The Mass-Spring-Damper System.}
    \label{fig:msd}
\end{figure}
In our first experiment, we consider the linear mass-spring-damper system, illustrated in Figure \ref{fig:msd},
consisting of masses $m_i$, springs with constants $k_i$ and dampers with non-negative damping constants $c_i$ for $i=1,2,\dots, N_0$. 
The input $u(t)$ is the external force exerted on the first mass $m_1$ and the output $y(t)$ is the velocity of the first mass. The state vector $\bx=[q_1,p_1,\dots,q_{N_0},p_{N_0}]\in \mathbb{R}^{2N_0}$, where $q_i$ and $p_i$ represent the displacement and momentum of the $i$th mass, respectively. The system is governed by \eqref{eq:PH_system} with Hamiltonian $H(\bx) =  \frac{1}{2} \bx^\intercal \bQ \bx$. For instance, when $N_0=3$, we have 
$$
\mathbf{J}= 
 \begin{bmatrix}
 0&1&0&0&0&0 \\
 -1&0&0&0&0&0   \\
 0&0&0&1&0&0 \\
 0&0&-1&0&0&0 \\
 0&0&0&0&0&1 \\
 0&0&0&0&-1&0 \\
\end{bmatrix}
, \quad
\mathbf{Q}= 
 \begin{bmatrix}
k_1&0&k_1&0&0&0\\
0&\frac{1}{m_1}&0&0&0&0\\
-k_1&0&k_1+k_2&0&-k_2&0\\
0&0&0&\frac{1}{m_2}&0&0\\
0&0&-k_2&0&k_2+k_3&0\\
0&0&0&0&0&\frac{1}{m_3}
\end{bmatrix},
$$
$$
\mathbf{R}= 
 \begin{array}{cc}
\text{diag}(0,c_1,0,c_2,0,c_3),
\end{array}
\quad
\text{and } 
\quad
\mathbf{B}^\intercal = 
\begin{bmatrix}
    0&1&0&0&0&0
\end{bmatrix}.
$$ 
\paragraph{Problem and computational setting}
We set $\bx_0=\mathbf{0}$, $N_0=100$, $m_i=4, k_i=4$ and $c_i=1$ for $i=1,2,\dots,100$, and choose the input of the system to be $u(t)= \exp({-\frac{t}{2}})\sin(t^2)$. For the full-order simulation, we set $T_{\text{FOM}}=10$ and use the implicit midpoint rule for time integration with a step size $\Delta t=0.01$.  

After constructing the snapshot matrices $\bX$, $\bY$, and $\bU$, and generating the reduced basis matrix $\bPhi \in \mathbb{R}^{n \times r}$, which consists of the $r$ leading left singular vectors of $\bX$, we apply \text{pH-OpInf-W} and \text{pH-OpInf-R} to infer $\bJ_r, \bR_r$ and $\bB_r$, as described in Section \ref{sec: GP-OpInf}. Since selecting appropriate hyper-parameters, $\lambda_W$ and $\lambda_R$, is crucial for the optimization, we next investigate the effects of these optimization parameters. 

\subsubsection*{Test 1. Effects of optimization parameters}\label{sec:parameter test}
First, we study the influence of the weight parameter $\lambda_W$ in the optimization problem \eqref{eq:opt} on the quality of the  \text{pH-OpInf-W}. 
Note that, in \eqref{eq:opt}, $\lambda_W$ serves as a balancing factor between the two error components, $\mathcal{E}_{\text{opt}_\bx}$ and $\mathcal{E}_{\text{opt}_\by}$, in the minimization process. 
We vary the reduced dimension $r$ from $5$, $10$, $15$ to $20$ and test eight values for $\lambda_W$ from the set $\{10^0,10^1,\dots,10^7\}$, for each value of $r$. 
Figure \ref{fig:linear_W_lambda_test} shows the corresponding optimization errors. 
As $\lambda_W$ increases up to $10^5$, the optimization error $\mathcal{E}_{\text{opt}_\bx}$ remains nearly unchanged for all $r$, while $\mathcal{E}_{\text{opt}_\by}$ decreases. Beyond this point, $\mathcal{E}_{\text{opt}_\by}$ either decays or levels off, while $\mathcal{E}_{\text{opt}_\bx}$ remains nearly unchanged for $r=5, 10, 15$, but increases for $r=20$. 
Therefore, $\lambda_W=10^5$ is selected in the rest of the experiments. 
\begin{figure}[!ht]
    \centering
    \includegraphics[width=0.45\linewidth]
    {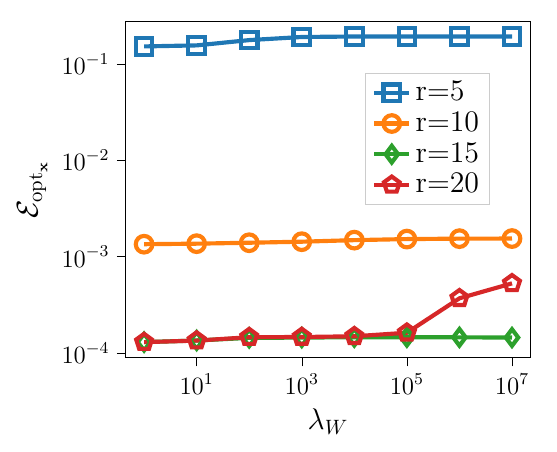}\hspace{0.3cm}
    \includegraphics[width=0.45\linewidth]{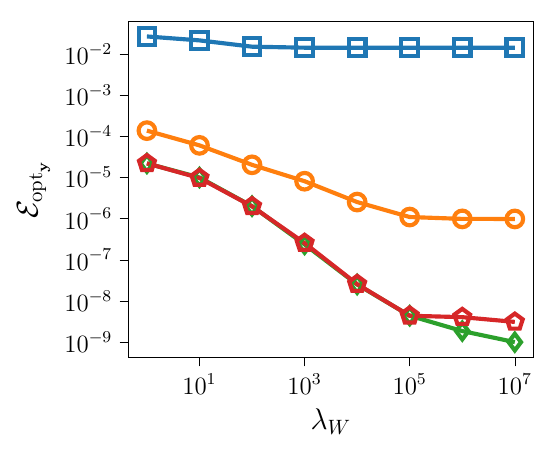}
    \caption{Mass-Spring-Damper System: Optimization errors by \text{pH-OpInf-W} of $r$ dimensions using different values of  $\lambda_W$: $\mathcal{E}_{\text{opt}_\bx}$ (left) and $\mathcal{E}_{\text{opt}_{\by}}$ (right).}
    \label{fig:linear_W_lambda_test}
\end{figure}

Next, we test the influence of the regularization parameter $\lambda_R$ on the optimization problem and the resulting effects on the \text{pH-OpInf-R} ROM. When $\lambda_R = 0$, we find that the solution $\bB_r$ from equation \eqref{eq:opt1} using least squares is poorly conditioned, leading to unstable ROM simulations. 
Taking $r = 5, 10, 15$ and $20$ and selecting eight values of $\lambda_R$ from the set $\{ 10^{-14}, 10^{-13}, \dots, 10^{-7} \}$, we solve \eqref{eq:opt1} and \eqref{eq:opt2}, respectively. The optimization errors $\mathcal{E}_{\text{opt}_\bx}$ and $\mathcal{E}_{\text{opt}_\by}$ are shown in Figure \ref{fig:linear_R_lambda_test}.  
For $r= 5$ and $10$, the optimization errors are relatively steady as $\lambda_R$ varies. However, for larger values of $r = 15$ and $20$, as $\lambda_R$ increases, both $\mathcal{E}_{\text{opt}_\bx}$ and $\mathcal{E}_{\text{opt}_\by}$ initially decrease (with $\mathcal{E}_{\text{opt}_\bx}$ remaining unchanged for $r=15$), but after $\lambda_R = 10^{-11}$, both begin to increase. 
Therefore, in subsequent experiments, we select $\lambda_R=10^{-11}$ in \text{pH-OpInf-R}. 
\begin{figure}[!ht]
    \centering
    \includegraphics[width=0.45\linewidth]{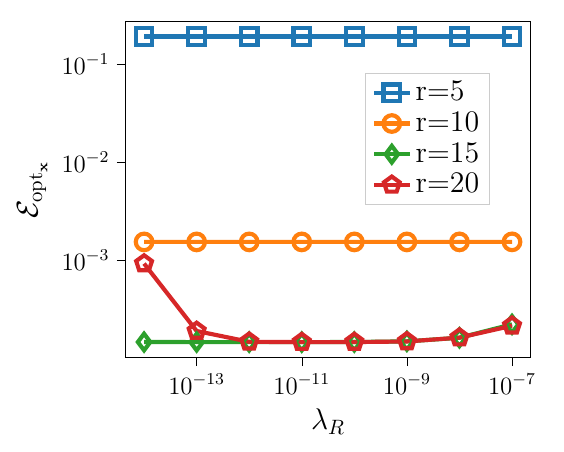}\hspace{0.3cm}
    \includegraphics[width=0.45\linewidth]{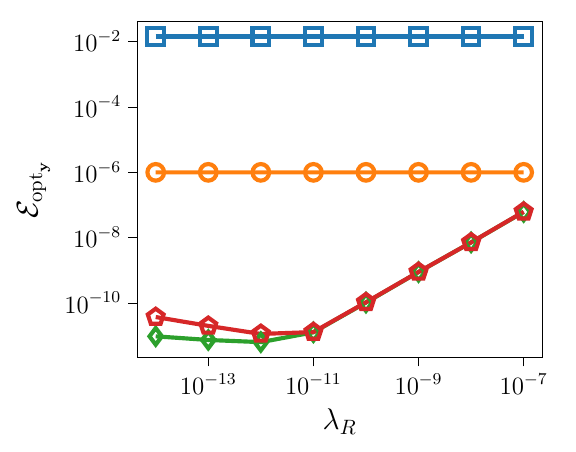}
    \caption{Mass-Spring-Damper System: Optimization errors by \text{pH-OpInf-R} of $r$ dimensions using different values of $\lambda_R$: $\mathcal{E}_{\text{opt}_\bx}$ (left)  and $\mathcal{E}_{\text{opt}_\by}$ (right).}
    \label{fig:linear_R_lambda_test}
\end{figure}

For \text{pH-OpInf-W} with $\lambda_W=10^5$ and \text{pH-OpInf-R} with $\lambda_R=10^{-11}$, the minimum eigenvalues of the obtained $\bR_r$ are listed in Table \ref{tab:linear_min_eigvalue} for $r = 5, 10, 15$ and $20$, which demonstrates the positive semi-definiteness of the obtained $\bR_r$. 
\begin{table}[!ht]
\centering
\caption{Mass-Spring-Damper System: Minimum eigenvalues of $\bR_r$ obtained by \text{pH-OpInf-W} and \text{pH-OpInf-R} for different values of $r$.}
\begin{tabular}{c|c|c|c|c}
   & r=5        & r=10       & r=15       & r=20       \\ \hline
\text{pH-OpInf-W}       & 1.460 $\times 10^{-5}$  & 7.864$\times 10^{-7}$  & 7.729$\times 10^{-5}$  & 9.396$\times 10^{-5}$  \\ 
\text{pH-OpInf-R}        & 5.486$\times 10^{-8}$ & 2.807$\times 10^{-6}$ & 3.567$\times 10^{-5}$ & 4.682$\times 10^{-5}$ 
\end{tabular}
\label{tab:linear_min_eigvalue}
\end{table}

\subsubsection*{Test 2. Illustration of the error estimates} 
Based on Corollary \ref{corollary1}, for this linear system, the state error is bounded by the sum of projection error, data error, and optimization error; and the output error is bounded by the state approximation error and the corresponding optimization error. 
Next, we illustrate the error estimates for the \text{pH-OpInf-ROM} \eqref{eq:PH_rom_inf_system}, generated using the inferred reduced operators from either \text{pH-OpInf-W} or \text{pH-OpInf-R}, and compare the approximation errors with those of the intrusive \text{SP-G-ROM} of the same dimensions.
To this end, we choose $\Delta t=10^{-3}$ such that the data error is negligible relative to other sources of error.

\begin{figure}[!ht]
\centering
  \includegraphics[width=0.45\linewidth]{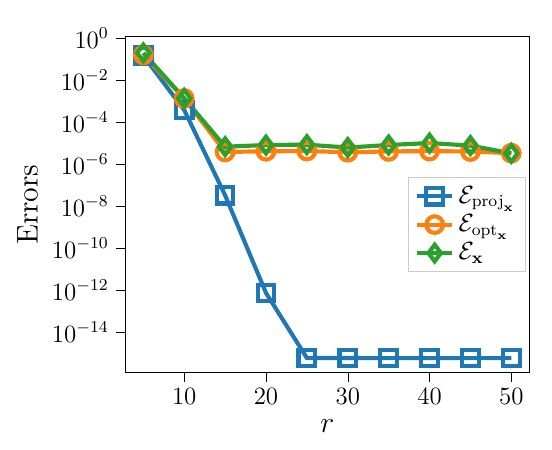} \hspace{0.3cm}
   \includegraphics[width=0.45\linewidth]{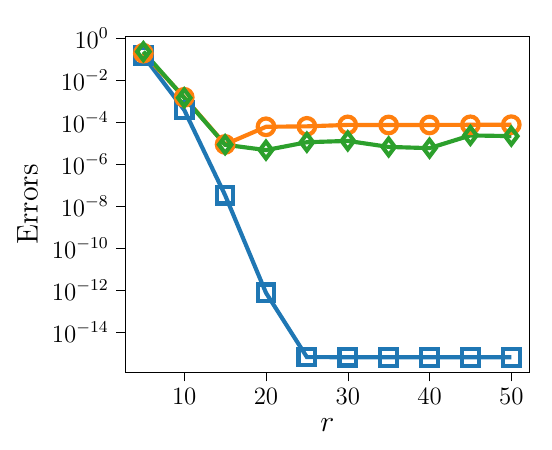} 
  \includegraphics[width=0.45\linewidth]{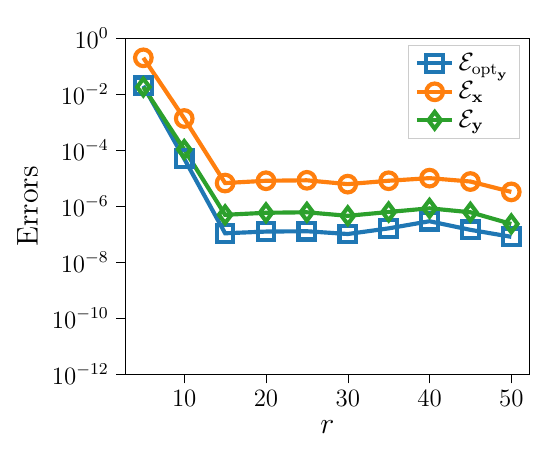} \hspace{0.3cm}
   \includegraphics[width=0.45\linewidth]{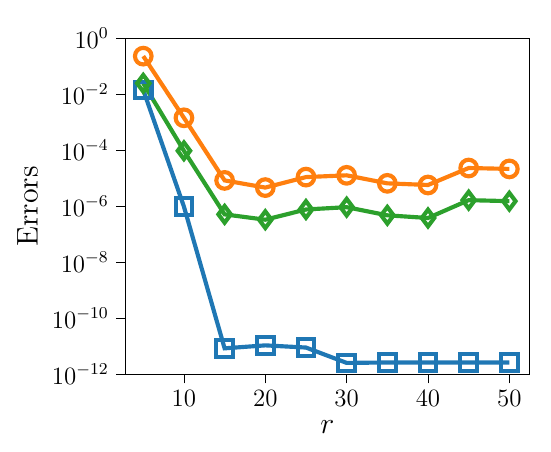} 
  \includegraphics[width=0.45\linewidth]{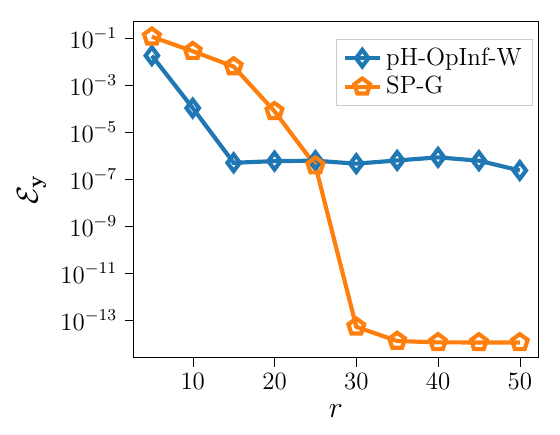} \hspace{0.3cm}
  \includegraphics[width=0.45\linewidth]{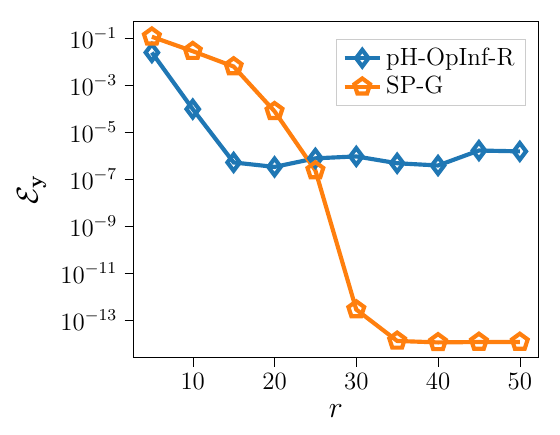} 
\caption{Mass-Spring-Damper System: Numerical errors of \text{pH-OpInf-W} (left) and \text{pH-OpInf-R} (right) of $r$ dimensions when $T_{\text{FOM}}=T_{\text{ROM}}=10$.}
\label{fig:linear_error_estimate}
\end{figure}

Setting $T_{\text{FOM}}=T_{\text{ROM}}=10$ and gradually increasing the ROM dimension $r$ by $5$, from $5$ to $50$, we simulate the \text{pH-OpInf-ROMs}. 
In this case, the singular values of the snapshot matrix $\bX$ decay quickly: the first 5 POD modes capture 95.24\% of the snapshot energy, while the first 10 modes capture 99.99\%.
The associated numerical errors defined in \eqref{eq:e_x_approx}--\eqref{eq:e_opt_y} are plotted in Figure \ref{fig:linear_error_estimate}: the left column corresponds to \text{pH-OpInf-W}, and the right column corresponds to \text{pH-OpInf-R}. 
Note that the magnitude of the state vectors is 0.661, while that of the output is 0.083. 
From the top row, we observe that in both cases, the projection error $\mathcal{E}_{\text{proj}_\bx}$ continues to decay as $r$ increases, eventually reaching the machine precision beyond $r=25$. Meanwhile, the optimization error $\mathcal{E}_{\text{opt}_\bx}$ decreases to around $10^{-5}$ as $r$ increases to $20$ and remains nearly steady. Consequently, the state approximation error $\mathcal{E}_\bx$ is dominated by $\mathcal{E}_{\text{opt}_\bx}$ and exhibits similar behavior. 
From the middle row, we find that the optimization error $\mathcal{E}_{\text{opt}_{\by}}$ from \text{pH-OpInf-R} is smaller than that from \text{pH-OpInf-W} when $r>5$, but in both cases, $\mathcal{E}_{\text{opt}_{\by}}$ is much smaller than the state approximation error $\mathcal{E}_\bx$. Thus, the output approximation error $\mathcal{E}_\by$ is dominated by $\mathcal{E}_\bx$ and displays similar behavior.  
In the bottom row, $\mathcal{E}_\by$ from the \text{pH-OpInf-ROMs} is compared with that of the \text{SP-G-ROM}. 
The \text{pH-OpInf-ROM}, based on either \text{pH-OpInf-W} or \text{pH-OpInf-R}, attains better results when $r<25$. Beyond that, \text{SP-G-ROM} achieves better approximations. 
Therefore, in case when the computation budget is a concern, a ROM with a lower dimension is preferable, and the \text{pH-OpInf-ROM} is the better choice. 
Furthermore, the time evolution of the output and approximate Hamiltonian from the \text{pH-OpInf-ROMs} is shown in 
Figure \ref{fig:linear_ROM_energy_Tf10}. We observe that the reduced-order approximations of both the output and the Hamiltonian, 
using either \text{pH-OpInf-W} or \text{pH-OpInf-R}, closely match those from the FOM simulation when $r\geq 10$. 
\begin{figure}[!ht]
\centering
    \includegraphics[width=0.45\linewidth]{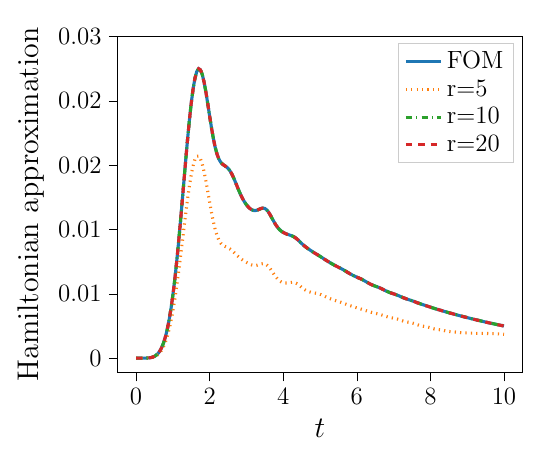}\hspace{0.3cm}
    \includegraphics[width=0.45\linewidth]{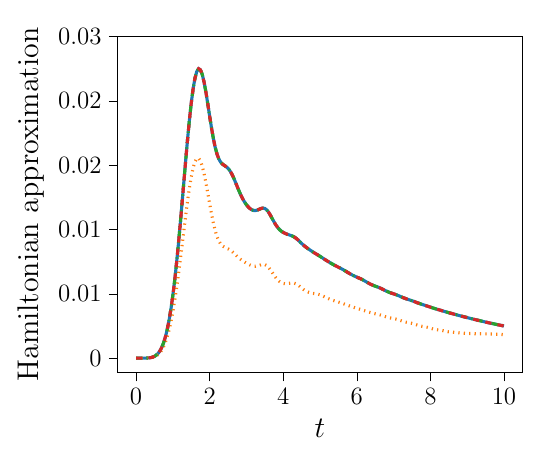}
    \includegraphics[width=0.45\linewidth]{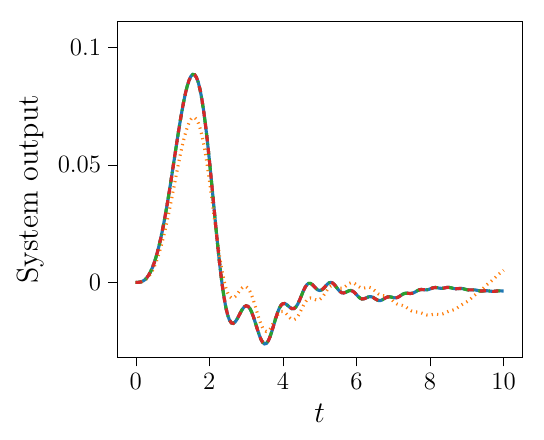}\hspace{0.3cm}
    \includegraphics[width=0.45\linewidth]{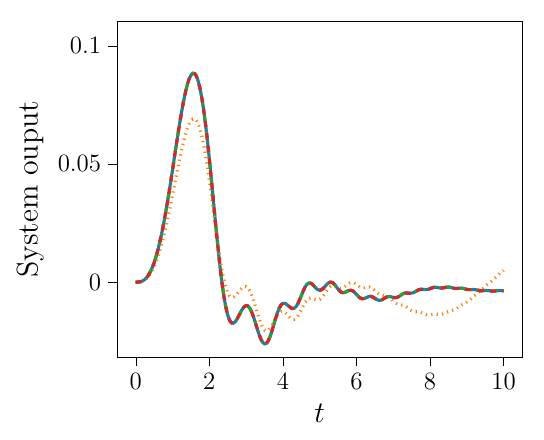}
    \caption{Mass-Spring-Damper System: Hamiltonian approximation (top row) and system output (bottom row) from \text{pH-OpInf-W} (left) and \text{pH-OpInf-R} (right) of $r$ dimensions, along with those of the FOM. All subplots share the same legend shown in the first one.}
    \label{fig:linear_ROM_energy_Tf10}
\end{figure}

Overall, the \text{pH-OpInf-ROM} constructed using reduced operators from either optimization problem provides good approximations. However, compared to \text{pH-OpInf-W}, the \text{pH-OpInf-R} achieves a smaller optimization error for the output. Therefore, we use \text{pH-OpInf-R} in the next experiment.

\subsubsection*{Test 3. Performance at a different input}\label{sec:linear_train_test}
We test the \text{pH-OpInf-ROM} using an input different from the one used to generate the training snapshots.
\begin{figure}[!ht]
    \centering
    \includegraphics[width=0.45\linewidth]{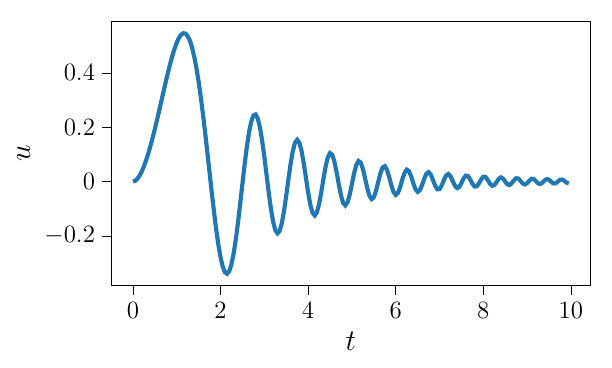}\hspace{0.3cm}
    \includegraphics[width=0.45\linewidth]{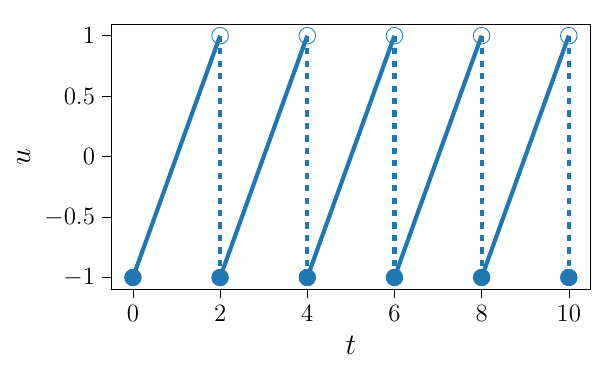}
    \caption{Mass-Spring-Damper System: Training input (left) and testing input (right) for Test 3.}
    \label{fig:linear_input}
\end{figure}
To infer the reduced operator, we use the data associated with the training input $u(t)= \exp({-\frac{t}{2}})\sin(t^2)$ as shown in Figure \ref{fig:linear_input} (left), which has increasing frequency to excite the model. 
After building the ROM with $r=20$ dimensions, we test it using a different input $u$, shown in Figure \ref{fig:linear_input} (right). 
The sawtooth signal is a challenging test case, as it is discontinuous. The corresponding output and Hamiltonian function from the FOM and ROM simulations are shown in Figure \ref{fig:linear_ROM_solu_energy}. The results indicate that the \text{pH-OpInf-ROM} performs well, as both the reduced-order output and the Hamiltonian function closely match the FOM results. 
\begin{figure}[!ht]
    \centering
    \includegraphics[width=0.44\linewidth]{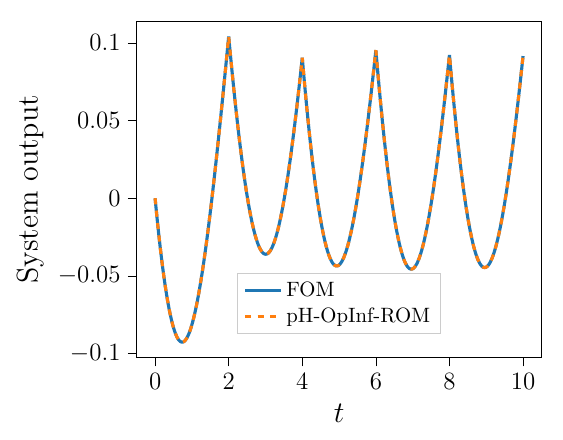}\hspace{0.3cm}
    \includegraphics[width=0.425\linewidth]{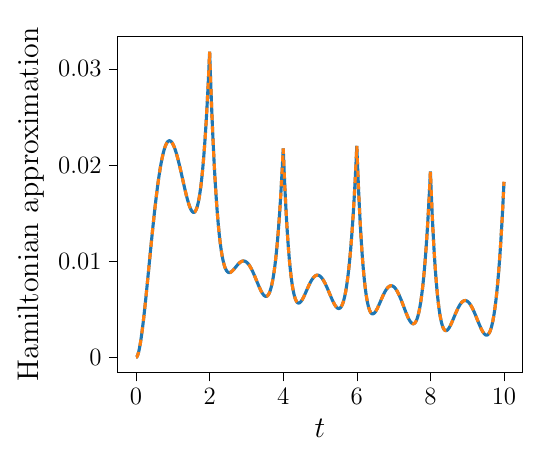}
    \caption{Mass-Spring-Damper System: Simulation results of \text{pH-OpInf-ROM} for $r=20$ and the FOM: system output (left) and Hamiltonian approximation (right) for Test 3.}
    \label{fig:linear_ROM_solu_energy}
\end{figure}

\subsection{Nonlinear Case: Toda Lattice Model} \label{sec:nonlinear case}
We consider the nonlinear Toda lattice model, which describes the motion of a chain of particles, each one connects to its nearest neighbors with `exponential springs'. The equations of motion for the $N_0$-particle Toda lattice with such exponential interactions can be written in the form of a nonlinear pH system  as in \eqref{eq:PH_system} with
$$
\mathbf{J}= 
\left[\begin{array}{cc}
\mathbf{0} & \mathbf{I}  \\
-\mathbf{I}& \mathbf{0} 
\end{array} \right]_{n\times n}
, \quad
\mathbf{R} = 
\left[\begin{array}{cc}
\mathbf{0} & \mathbf{0}  \\
\mathbf{0}& \text{diag}(\gamma_1, \dots,\gamma_{N_0}) 
\end{array} \right]_{n\times n}, 
\quad
\text{and } 
\quad
\mathbf{B} = 
\left[\begin{array}{c}
\mathbf{0} \\
\mathbf{e_1}
\end{array} \right]_{n \times 1},
$$ 
where $\be_1=[1,0,\dots,0]^\intercal_{N_0}$, $n=2N_0$, $\gamma_j$ represents damping coefficients associated with the $j$th particle in the system and 
$$
\mathbf{x} = 
\left[\begin{array}{c}
\mathbf{q} \\
\mathbf{p}
\end{array} \right]_{n \times 1}
\,\,
\text{, } 
\,\,
\mathbf{q} = \left[q_1,\dots,q_{N_0}\right]^\intercal 
\,\,
\text{and } 
\,\,
\mathbf{p} = \left[p_1,\dots,p_{N_0}\right]^\intercal,
$$
with $q_j$ and $p_j$, for $j=1,\dots,N_0$, being the displacement of the $j$th particle from its equilibrium position and the momentum, respectively. The associated Hamiltonian function is nonlinear, given by 
\begin{equation}\label{eq:Toda energy}
    H(\bx) =\sum_{k=1}^{N_0} \frac{1}{2} p_k^2 + \sum_{k=1}^{N_0-1} \exp(q_  k-q_{k+1}) + \exp(q_{N_0}) -q_1-N_0.
\end{equation}

\paragraph{Problem and computational setting} 
We set $\bx_0 = \mathbf{0}$, $N_0 =1000$, and $\gamma_j=0.1$ for $j=1,\dots,N_0$.
The system is excited by the input $u(t) = 0.1\sin(t)$. 
In the full-order simulation with the final time $T_{\text{FOM}}=50$, we use the implicit midpoint rule with the time step size $\Delta t= 0.01$ for time integration. 
The reduced operators $\bJ_r, \bR_r$ and $\bB_r$ are then inferred as described in Section \ref{sec: GP-OpInf} through either \text{pH-OpInf-W} or \text{pH-OpInf-R}. Next, we investigate the effects of the optimization parameters $\lambda_W$ and $\lambda_R$ in the optimization problems.

\subsubsection*{Test 1. Effects of optimization parameters} \label{sec:hyper-parameters}
We analyze the effect of the weight parameter $\lambda_W$ in the optimization problem \eqref{eq:opt} on the quality of the \text{pH-OpInf-W} ROM, by taking reduced dimensions $r=20, 40, 60$ and $80$ and testing eight values of $\lambda_W$ from the set $\{10^0,10^1,\dots,10^7\}$. Figure \ref{fig:nonlinear_W_lambba_test} illustrates the optimization errors $\mathcal{E}_{\text{opt}_\bx}$ (left) and $\mathcal{E}_{\text{opt}_{\by}}$ (right) for varying $\lambda_W$ and $r$. When $r$ is small, variations in $\lambda_W$ have a minimal impact on the optimization. However, for larger dimensions ($r = 60$ and $80$), increasing $\lambda_W$ reduces $\mathcal{E}_{\text{opt}_{\by}}$ while increasing $\mathcal{E}_{\text{opt}_\bx}$ in the optimization problem \eqref{eq:opt}. To balance these terms, we set $\lambda_W=10^3$ for the remainder of the experiments. 
\begin{figure}[!ht]
    \centering
    \includegraphics[width=0.45\linewidth]{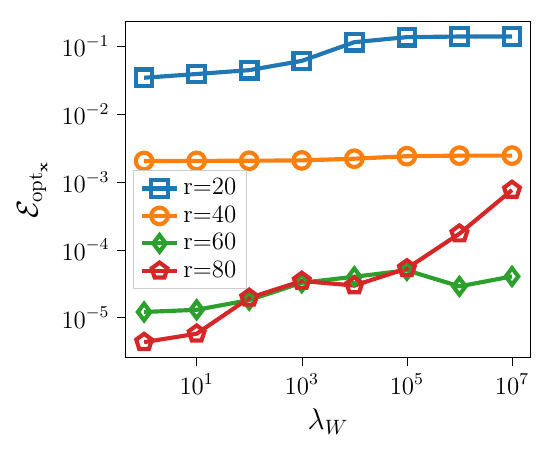}\hspace{0.3cm}
    \includegraphics[width=0.45\linewidth]{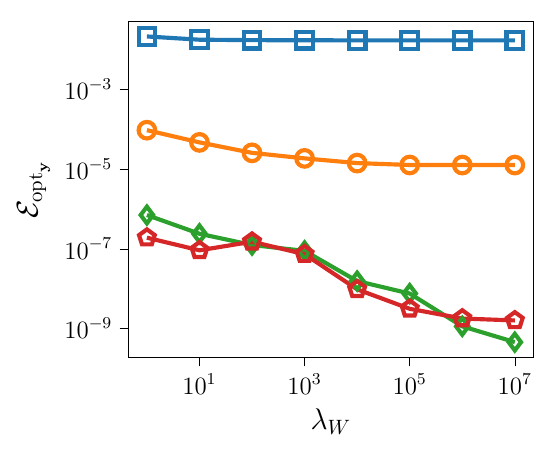}
    \caption{Toda Lattice Model: Optimization errors by \text{pH-OpInf-W} of $r$ dimensions: $\mathcal{E}_{\text{opt}_\bx}$ (left) and $\mathcal{E}_{\text{opt}_{\by}}$ (right).}
    \label{fig:nonlinear_W_lambba_test}
\end{figure}

Next, we examine the effect of the regularization parameter $\lambda_R$ in the optimization problem 
\eqref{eq:opt1}--\eqref{eq:opt2}, \text{pH-OpInf-R}. To this end, we pick eight values of $\lambda_R$ from the set $\{10^{-14}, 10^{-13}, \dots, 10^{-7}\}$ for each value of $r$ from $\{20,40,60,80\}$. Figure \ref{fig:nonlinear_R_lambba_test} shows the corresponding optimization errors $\mathcal{E}_{\text{opt}_\bx}$ and $\mathcal{E}_{\text{opt}_{\by}}$. For $r=20$ and 40, the errors remain relatively steady as $\lambda_R$ varies. However, for the larger dimensions ($r=60$ and 80), both optimization errors initially decrease and then increase after $\lambda_R=10^{-11}$, consistent with the behavior observed in the linear case. Based on these observations, we set $\lambda_R=10^{-11}$ in the subsequent experiments. 
\begin{figure}[!ht]
    \centering
    \includegraphics[width=0.45\linewidth]{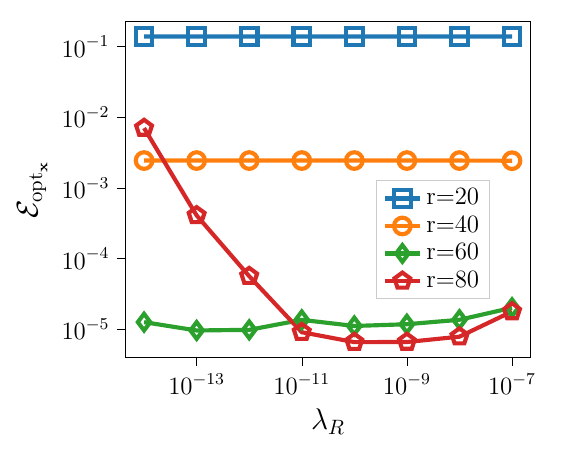}\hspace{0.3cm}
    \includegraphics[width=0.45\linewidth]{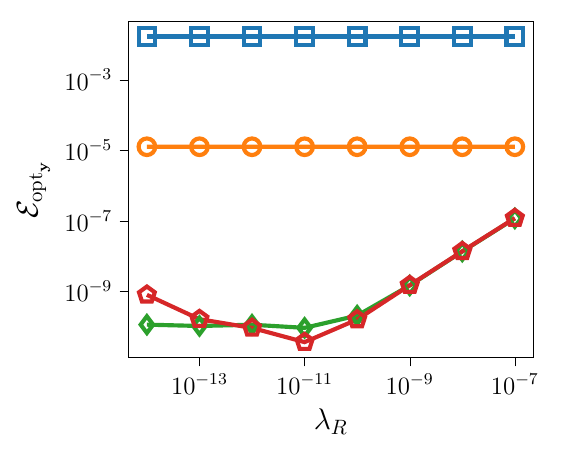}
    \caption{Toda Lattice Model: Optimization errors by \text{pH-OpInf-R} of $r$ dimensions: $\mathcal{E}_{\text{opt}_\bx}$ (left) and $\mathcal{E}_{\text{opt}_{\by}}$ (right).}
    \label{fig:nonlinear_R_lambba_test}
\end{figure}

With $\lambda_W=10^3$ in \text{pH-OpInf-W} and $\lambda_R=10^{-11}$ in \text{pH-OpInf-R}, the minimum eigenvalues of the obtained $\bR_r$ are list in Table \ref{tab:nonlinear_min_eigvalue} for $r=20,40,60$ and 80, confirming the positive semi-definiteness of the inferred $\bR_r$ in both approaches.

\begin{table}[!ht]
\centering
\caption{Toda Lattice Model: Minimum eigenvalues of $\bR_r$ obtained by \text{pH-OpInf-W} and \text{pH-OpInf-R} for different values of $r$.}
\begin{tabular}{c|c|c|c|c}
 & r=20        & r=40       & r=60       & r=80       \\ \hline
\text{pH-OpInf-W}       & 1.962 $\times 10^{-7}$ & 3.857$\times 10^{-6}$& 4.103$\times 10^{-4}$ & 4.074$\times 10^{-4}$ \\
\text{pH-OpInf-R}        & 2.250$\times 10^{-10}$ & 2.924$\times 10^{-6}$ & 1.637$\times 10^{-4}$ & 1.168$\times 10^{-4}$ 
\end{tabular}
\label{tab:nonlinear_min_eigvalue}
\end{table}

\subsubsection*{Test 2. Illustration of the error estimates}\label{sec:error_estimate}
For this nonlinear pH system, we first consider \text{pH-OpInf-ROM} without using the hyper-reduction. To illustrate the error estimation given in Corollary \ref{corollary1}, we choose a small time step size $\Delta t = 0.0025$ in both FOM and ROM simulations, set $T_{\text{FOM}}=T_{\text{ROM}}=50$ and vary the dimension $r$ of the \text{pH-OpInf-ROM} from 10 to 100, increasing incrementally by 10. 
In this case, the singular values of the snapshot matrix $\bX$ decay more slowly than those in the mass-spring-damper case: the first 10 POD modes capture 97.97\% of snapshot energy, the first 20 capture 99.8\%, and the first 30 already capture 99.99\%. 
The associated numerical errors defined in \eqref{eq:e_x_approx}--\eqref{eq:e_opt_y} are plotted in Figure \ref{fig:nonlinear_error_estimate}: 
the left column corresponds to \text{pH-OpInf-W}, and the right column corresponds to \text{pH-OpInf-R}. 
Note that the magnitude of the state vectors is 2.225, while that of the output is 0.483. 
The top row displays $\mathcal{E}_\bx$, $\mathcal{E}_{\text{proj}_\bx}$ and $\mathcal{E}_{\text{opt}_\bx}$. For both \text{pH-OpInf-W} and \text{pH-OpInf-R}, $\mathcal{E}_{\text{opt}_\bx}$ decreases as the dimension $r$ increases up to $r=60$, beyond which it saturates. However, $\mathcal{E}_{\text{proj}_\bx}$ continues to decline until $r=90$, for which it reaches machine precision. As a result, the state approximation error $\mathcal{E}_\bx$ is dominated by $\mathcal{E}_{\text{opt}_\bx}$ and follows a similar trend.
The output approximation error $\mathcal{E}_\by$ is shown alongside $\mathcal{E}_{\text{opt}_\by}$ and $\mathcal{E}_\bx$ in the middle row, from which we observe that  the optimization error $\mathcal{E}_{\text{opt}_{\by}}$ from \text{pH-OpInf-R} is smaller than that from \text{pH-OpInf-W} when $r$ is bigger than $40$. In both cases, $\mathcal{E}_{\text{opt}_{\by}}$ is smaller than $\mathcal{E}_\bx$, thus the output approximation error $\mathcal{E}_\by$ is primarily influenced by $\mathcal{E}_\bx$ and follows a similar pattern. 
The bottom row in Figure \ref{fig:nonlinear_error_estimate} compares the approximation errors of \text{pH-OpInf-ROM} with the intrusive \text{SP-G-ROM}.
The results show that the \text{pH-OpInf-ROM}, based on either \text{pH-OpInf-W} or \text{pH-OpInf-R}, achieves better approximations than the instrusive \text{SP-G-ROM} for dimensions up to a fairly large dimension $r = 80$.

\begin{figure}[!ht]
\centering
  \includegraphics[width=0.45\linewidth]{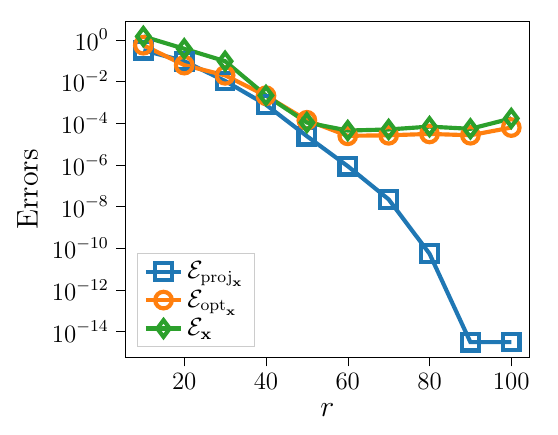}\hspace{0.3cm} 
   \includegraphics[width=0.45\linewidth]{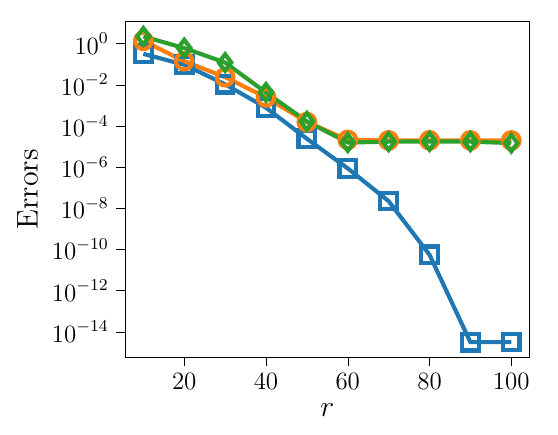} 
  \includegraphics[width=0.45\linewidth]{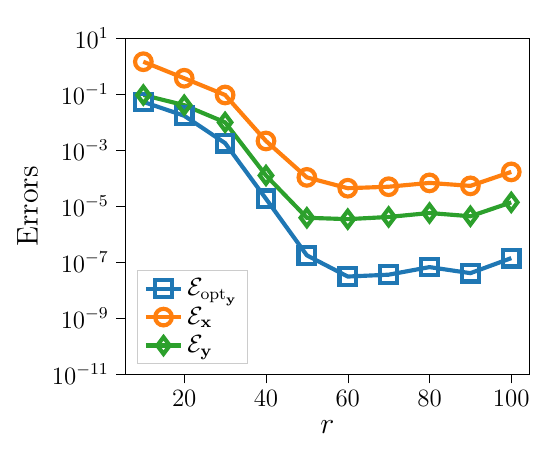} \hspace{0.3cm} 
  \includegraphics[width=0.45\linewidth]{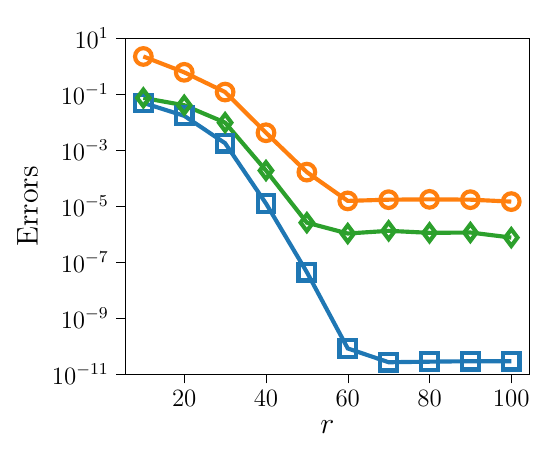}  
  \includegraphics[width=0.45\linewidth]{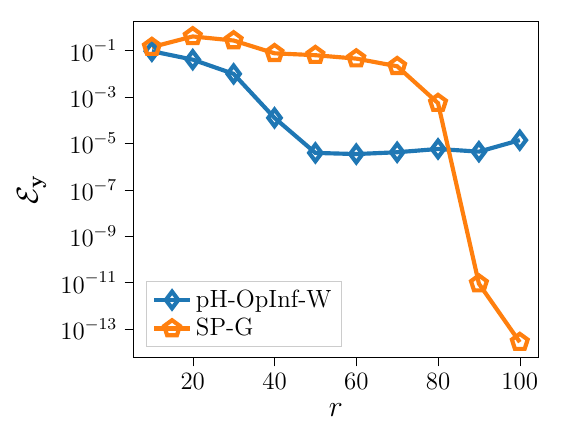} \hspace{0.3cm}
  \includegraphics[width=0.45\linewidth]{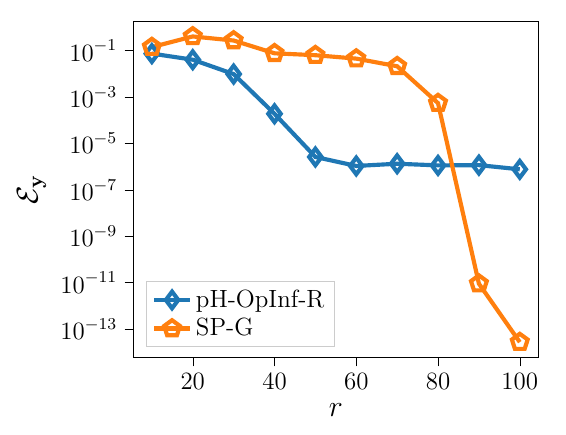} 
\caption{Toda Lattice Model: Numerical errors of \text{pH-OpInf-W} (left) and \text{pH-OpInf-R} (right) of $r$ dimensions when $T_{\text{FOM}}=T_{\text{ROM}}=50$.}
\label{fig:nonlinear_error_estimate}
\end{figure}

The time evolution of the output and approximate Hamiltonian from the \text{pH-OpInf-ROMs} is presented in Figure \ref{fig:nonlinear_ROM_energy_Tf50} when $r= 20$, $40$ and $60$, respectively. 
We observe that the reduced-order approximations, using either \text{pH-OpInf-W} or \text{pH-OpInf-R}, get more accurate as $r$ increases, and when $r= 40$, they closely align with those from the FOM simulation. Similar to the linear case in Section \ref{sec:linear case}, although both achieve good approximations, the \text{pH-OpInf-R} attains a smaller optimization error $\mathcal{E}_{\text{opt}_{\by}}$. Therefore, we use  \text{pH-OpInf-R} in the subsequent experiments.
\begin{figure}[!ht]
\centering
    \includegraphics[width=0.45\linewidth]{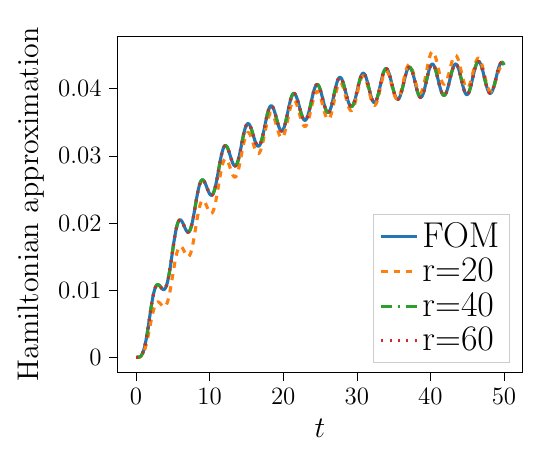}\hspace{0.3cm}
    \includegraphics[width=0.45\linewidth]{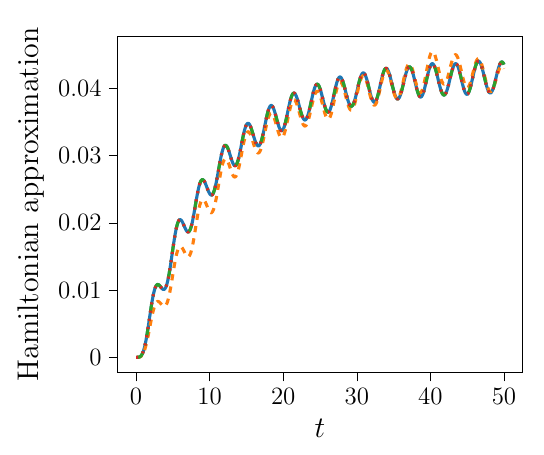}
    \includegraphics[width=0.45\linewidth]{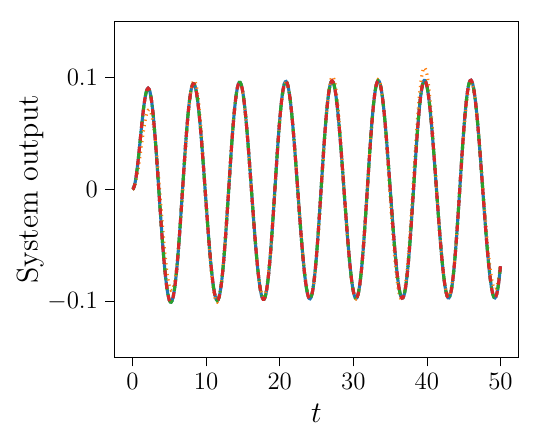}\hspace{0.3cm}
    \includegraphics[width=0.45\linewidth]{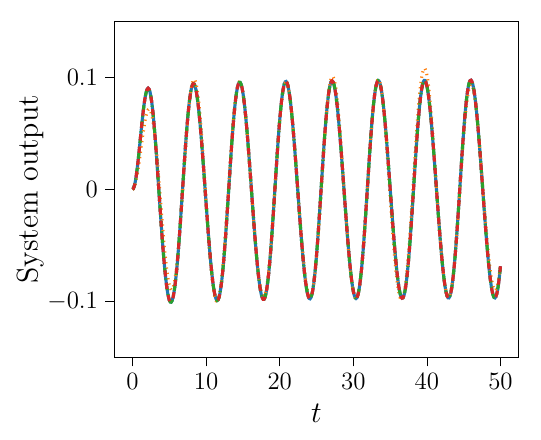}
    \caption{Toda Lattice Model: Hamiltonian approximation (top row) and system output (bottom row) from \text{pH-OpInf-W} (left) and \text{pH-OpInf-R} (right) of $r$ dimensions, along with those of the FOM. All subplots share the same legend shown in the first one.}
    \label{fig:nonlinear_ROM_energy_Tf50}
\end{figure}

\subsubsection*{Test 3. Performance of hyper-reduction}
Next, we apply hyper-reduction and test the performance of \text{pH-OpInf-DEIM}. The nonlinear Hamiltonian function in \eqref{eq:Toda energy} is recast to
\begin{equation*}
    H(\bx) =  \frac{1}{2}\bx^\intercal \bQ \bx + {\bc}^\intercal h(\bx), \quad \text{where} \quad
\mathbf{Q}= 
\left[ \begin{matrix}
\mathbf{0} & \mathbf{0}  \\
\mathbf{0}& \mathbf{I}_{N_0} 
\end{matrix} \right]_{n\times n}
\end{equation*}
and 
$\bc = [1,1,\dots,1]^\intercal\in \mathbb{R}^{N_0}$ with $h_1 = \exp(q_1-q_2) -q_1-N_0$, and $h_i = \exp(q_i-q_{i+1})$ for $i=2,3,\dots,N_0-1$ and $h_{N_0} = \exp(q_{N_0})$.
Choosing $r=20$ and $60$, respectively, we simulate the \text{pH-OpInf-DEIM} using $m$ interpolation points in DEIM and compare it with the  \text{pH-OpInf-ROM} of the same dimensions. The numerical errors in state and output approximations are listed in Table \ref{tab:DEIM_errors}, together with the DEIM interpolation error $\mathcal{E}_{\text{DEIM}}$. Overall, $\mathcal{E}_{\text{DEIM}}$ decreases quickly as $m$ increases. For both $r=20$ and $60$, its precision reaches $10^{-11}$ or higher when $m = 60$, and the \text{pH-OpInf-DEIM} results coincide with those of \text{pH-OpInf-ROM}. This observation matches our theoretical results, as stated in Theorems \ref{thm:1} and \ref{thm:2}, because, comparing with \text{pH-OpInf-ROM}, the error bound of \text{pH-OpInf-DEIM} involves the additional term $\mathcal{E}_{\text{DEIM}}$. When $\mathcal{E}_{\text{DEIM}}$ is sufficiently small, errors from both ROMs should become identical. 
\begin{table}[!ht]
    \centering
    \begin{subtable}{\textwidth}
        \centering
        \begin{tabular}{c|c|c|c|c|c}
                             & \multirow{2}{3cm}{\centering \text{pH-OpInf-ROM} with $r=20$ } & \multicolumn{4}{c}{\text{pH-OpInf-DEIM} with $r=20$}          
                             \\ \cline{3-6} 
                             &      & $m=30$     & $m=40$     & $m=50$      & $m=60$  \\ \hline
        $\mathcal{E}_\mathbf{x}$      & 6.042 $\times 10^{-1}$ &6.494$\times 10^{-1}$   &5.958$\times 10^{-1}$ & 6.042$\times 10^{-1}$ &6.042$\times 10^{-1}$\\ \hline
        $\mathcal{E}_\mathbf{y}$      & 4.032$\times 10^{-2}$ & 4.400$\times 10^{-2}$   & 4.030$\times 10^{-2}$ & 4.032$\times 10^{-2}$  & 4.032$\times 10^{-2}$\\ \hline
        $\mathcal{E}_{\text{DEIM}}$ &    -      & 3.985$\times 10^{-2}$ &   7.552$\times 10^{-4}$&4.095$\times 10^{-8}$ & 4.585$\times 10^{-15}$ 
        \end{tabular}
    \end{subtable}%
     \vspace{0.2cm}   
    \begin{subtable}{\textwidth}
        \centering
        \begin{tabular}{c|c|c|c|c|c}
                             & \multirow{2}{3cm}{\centering \text{pH-OpInf-ROM} with $r=60$ } & \multicolumn{4}{c}{\text{pH-OpInf-DEIM} with $r=60$}           
                             \\ \cline{3-6}
                             &      & $m=50$     & $m=55$      & $m=60$   & $m=65$   \\ \hline
        $\mathcal{E}_\mathbf{x}$      & 1.719$\times 10^{-4}$  & 1.745$\times 10^{-4}$  &   1.719$\times 10^{-4}$   &  1.719$\times 10^{-4}$ &   1.719$\times 10^{-4}$ \\ \hline
        $\mathcal{E}_\mathbf{y}$      & 7.748$\times 10^{-6}$  & 7.781$\times 10^{-6}$  & 7.748$\times 10^{-6}$   &7.748$\times 10^{-6}$ &7.748$\times 10^{-6}$    \\ \hline
        $\mathcal{E}_{\text{DEIM}}$ &    -      & 1.184$\times 10^{-5}$&   4.384$\times 10^{-9}$ & 3.478$\times 10^{-11}$ & 1.737$\times 10^{-13}$ 
        \end{tabular}
     \end{subtable}   
    \caption{Toda Lattice Model: Errors of \text{pH-OpInf-ROM} with dimension $r$ and \text{pH-OpInf-DEIM} with the same dimension using $m$ DEIM interpolation points.}
    \label{tab:DEIM_errors}
\end{table}


\subsubsection*{Test 4. Performance at a different input} \label{sec:train_test}
We test \text{pH-OpInf-ROM} and \text{pH-OpInf-DEIM} using an input that differs from the training input. Particularly, to infer the reduced operators, we use the training input $u(t) = 0.1 \sin(t)$, as shown in Figure \ref{fig:nonlinear_train_test_input} (left), and simulate the reduced-order systems with a testing input, as shown in Figure \ref{fig:nonlinear_train_test_input} (right). The sawtooth signal is discontinuous, which makes the test challenging.
Figure \ref{fig:nonlinear_train_test_sol_energy} compares the system output and the Hamiltonian function obtained by simulating the \text{pH-OpInf-ROM} with dimension $r=50$ and the \text{pH-OpInf-DEIM} with the same dimension and $m=50$, along with the FOM results. Both \text{pH-OpInf-ROM} and \text{pH-OpInf-DEIM} are able to accurately capture the system output in response to the testing input and effectively approximate the Hamiltonian function during the simulation.

\begin{figure}[!ht]
\centering
  \includegraphics[width=0.45\linewidth]{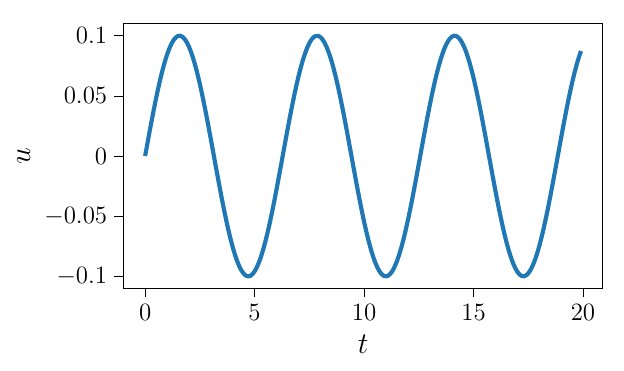} 
\hspace{0.3cm}
  \includegraphics[width=0.45\linewidth]{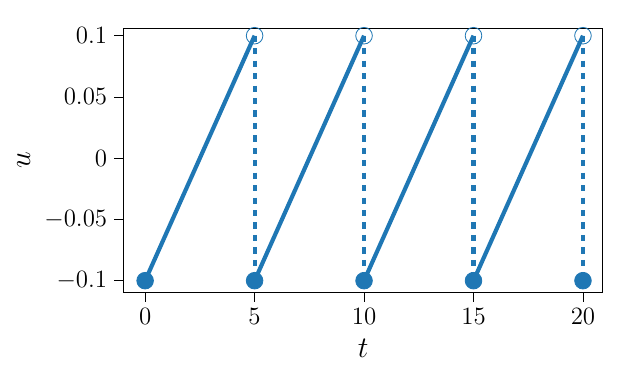}
\caption{Toda Lattice Model: Training input (left) and testing input (right) for Test 4.}
\label{fig:nonlinear_train_test_input}
\end{figure}

\begin{figure}[!ht]
\centering
  \includegraphics[width=0.45\linewidth]{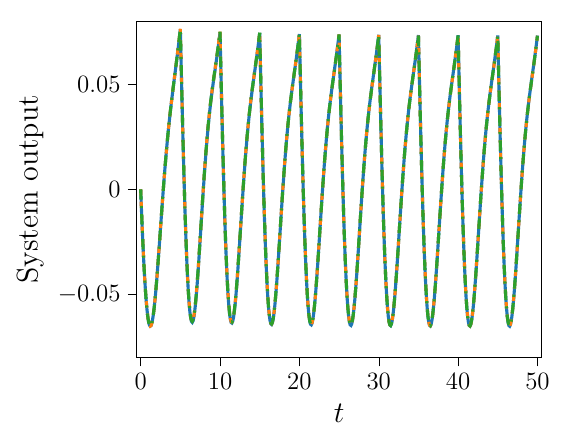}\hspace{0.3cm}  
  \includegraphics[width=0.42\linewidth]{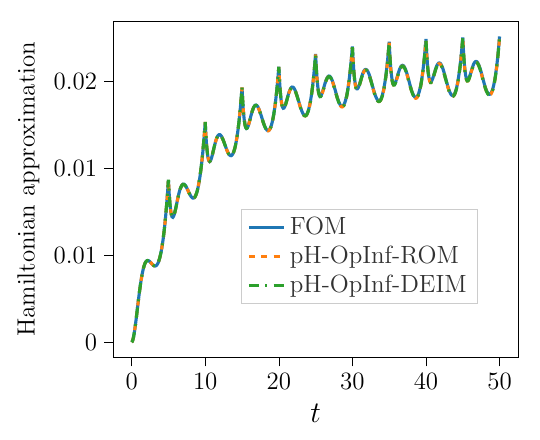}
\caption{Toda Lattice Model: Simulation results of \text{pH-OpInf-ROM} for $r=50$, \text{pH-OpInf-DEIM} for $r=50$ and $m=50$, and the FOM: system output (left) and Hamiltonian approximation (right) for Test 4.}
\label{fig:nonlinear_train_test_sol_energy}
\end{figure}

\section{Conclusions} \label{sec: conclusion}
In this work, we have extended the operator inference framework to learn structure-preserving reduced-order models for port-Hamiltonian systems. By leveraging data of the system's state, input, and output, as well as the Hamiltonian function, we formulate two optimization problems and corresponding ROMs,  \text{pH-OpInf-W} and \text{pH-OpInf-R}, for inferring reduced-order operators that retain the physical properties and geometric structure of the original system. 
The former finds reduced operators, $\bB_r$ and $\bD_r$ (hence $\bJ_r$ and $\bR_r$), simultaneously by solving a semi-definite optimization, while the latter decouples the optimization into two steps: the first step determines the reduced operator $\bB_r$ using regularized least squares, and the second step identifies the other reduced operator $\bD_r$ through a semi-definite optimization. 
Based on the reduced-order operators, the low-dimensional \text{pH-OpInf-ROM} can be constructed. To further address the challenges of evaluating nonlinear terms in the ROM, we use a DEIM-based hyper-reduction method, which leads to the \text{pH-OpInf-DEIM} model. We analyze the corresponding approximation errors in the system's state and output and and numerically verify them through experiments on a linear mass-spring-damper system and a nonlinear Toda lattice model.

The numerical results show that both \text{pH-OpInf-W} and \text{pH-OpInf-R} with carefully chosen optimization parameters find reduced-order operators that preserve the appropriate structures.  
Generally, the overall performance of these two approaches is comparable, since the \text{pH-OpInf-ROM} constructed using either approach yields accurate outputs in response to inputs. However, \text{pH-OpInf-R} achieves a smaller optimization error for the output, $\mathcal{E}_{\text{opt}_\by}$, than \text{pH-OpInf-W}, which makes it the preferred choice.   
Furthermore, for nonlinear pH systems, the \text{pH-OpInf-DEIM} model using sufficient DEIM interpolation points is able to achieve the same performance at a reduced computational cost. 
We note that alternative hyper-reduction approaches, such as ECSW \cite{farhat2015structure}, may also be applied to inferred ROMs when they are nonlinear. We will investigate their performance in future work.

\section*{Acknowledgement}
Y.G. was partially supported by the SPARC grant from the University of South Carolina under award number CL071-130600-N1400-202-80006259-1.
L.J. was partially supported by the U.S. Department of Energy under award numbers DE-SC0022254 and DE-SC0025527.
B.K. and Z.W. were  supported by U.S. Office of Naval Research under award number N00014-22-1-2624. 
Z.W. was partially supported by U.S. National Science Foundation under award numbers
DMS-2012469, DMS-2038080, DMS-2245097 and an ASPIRE grant from the Office of the Vice President for Research at the University of South Carolina.


\bibliographystyle{elsarticle-num}
\bibliography{ref.bib}

\end{document}